	\newcolumntype{Y}{>{\centering\arraybackslash}X}
	\definecolor{linkblue}{HTML}{3d25b9}
	\setlist{topsep=0pt,itemsep=0pt}
	\titlespacing{\section}{0pt}{12pt}{0pt}
	\titlespacing{\subsection}{0pt}{6pt}{0pt}
	\crefname{equation}{equation}{equations}
\theoremstyle{plain}
	\newtheorem{theorem}{Theorem}
	\newtheorem{proposition}[theorem]{Proposition}
	\newtheorem{lemma}[theorem]{Lemma}
	\newtheorem{conjecture}[theorem]{Conjecture}
	\newtheorem{algorithm}[theorem]{Algorithm}
	\numberwithin{theorem}{section}
\theoremstyle{definition}
	\newtheorem{definition}[theorem]{Definition}
	\newtheorem{example}[theorem]{Example}
\newcommand{\E}{\mathcal{E}}
\renewcommand{\geq}{\geqslant}
\newcommand{\h}{\hbar}
\newcommand{\hcancel}[1]{\hbox{\sout{$#1$}}}
\newcommand{\HH}{\mathrm{Hur}}
\renewcommand{\leq}{\leqslant}
\title{The structure of Hurwitz numbers with fixed ramification profile and varying genus}
\author{Norman Do \and Jian He \and Heath Robertson}
\begin{document}

\makeatletter
\textbf{\large \thetitle}

\textbf{\theauthor}
\makeatother

School of Mathematics, Monash University, VIC 3800 Australia \\
Email: \href{mailto:norm.do@monash.edu}{norm.do@monash.edu}, \href{mailto:jian.he@monash.edu}{jian.he@monash.edu}, \href{mailto:heath.robertson@monash.edu}{heath.robertson@monash.edu}

{\em Abstract.} In 1891, Hurwitz introduced the enumeration of genus $g$, degree $d$, branched covers of the Riemann sphere with simple ramification over prescribed points and no branching elsewhere. He showed that for fixed degree $d$, the enumeration possesses a remarkable structure. More precisely, it can be expressed as a linear combination of exponentials $m^{2g-2+2d}$, where $m$ ranges over the integers from $1$ to $\binom{d}{2}$.

In this paper, we generalise this structural result to Hurwitz numbers that enumerate branched covers which also have a prescribed ramification profile over one point. Our proof fundamentally uses the infinite wedge space, in particular the connected correlators of products of $\E$-operators. The recent study of Hurwitz numbers has often focussed on their structure with fixed genus and varying ramification profile. Our main result is orthogonal to this, allowing for the explicit calculation and the asymptotic analysis of Hurwitz numbers in large genus.

We pose the broad question of which other enumerative problems exhibit analogous structure. We prove that orbifold Hurwitz numbers can also be expressed as a linear combination of exponentials and conjecture that monotone Hurwitz numbers share a similar structure, but with the inclusion of an additional linear term.

\emph{Acknowledgements.} The first author would like to thank Scott Mullane for the initial question that motivated this work. The third author undertook this research as part of an undergraduate research program in the School of Mathematics at Monash University.

\emph{2020 Mathematics Subject Classification.} 05A15, 05E14, 14H30, 14N10

\vspace{6mm} \hrule \vspace{4mm}

\tableofcontents

\vspace{6mm} \hrule

\section{Introduction}

In 1891, Hurwitz introduced the weighted enumeration $\HH_{g,d}$ of genus $g$, degree $d$, branched covers of the Riemann sphere with simple ramification over $2d+2g-2$ prescribed points and no branching elsewhere~\cite{hur1891}. The weight of a branched cover is the reciprocal of its number of automorphisms --- see \cref{def:hurwitz} below for further details. Considering a branched cover via its monodromy representation allows one to translate the problem to one of enumerating permutations. In particular, $\HH_{g,d}$ is $\frac{1}{d!}$ times the number of tuples $(\tau_1, \tau_2, \ldots, \tau_{2d+2g-2})$ of transpositions in the symmetric group $S_d$ that generate a transitive subgroup and whose product is the identity. Hurwitz discovered the following remarkable structure to this enumeration for fixed degree and varying genus.

\begin{theorem}[Hurwitz~\cite{hur1891,hur1901}] \label{thm:hurwitz}
For fixed $d \geq 2$, the function $\HH_{g,d}$ can be expressed as follows, where $B(d,m) \in \mathbb{Z}$.
\[
\HH_{g,d} = \frac{2}{d!^2} \sum_{1 \leq m \leq \binom{d}{2}} B(d, m) \cdot m^{2d+2g-2}
\]
\end{theorem}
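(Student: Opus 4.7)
My plan is to attack Hurwitz's theorem through the character theory of $S_d$. Transpositions form a single conjugacy class of size $\binom{d}{2}$, so the Frobenius formula expresses the unconstrained count of factorisations of the identity as a product of $n=2d+2g-2$ transpositions as
\[
\frac{\binom{d}{2}^{n}}{d!}\sum_{\lambda\vdash d}\frac{\chi^{\lambda}(\tau)^{n}}{\chi^{\lambda}(1)^{n-2}}.
\]
I would then invoke the Jucys--Murphy identity $\binom{d}{2}\,\chi^{\lambda}(\tau)/\chi^{\lambda}(1)=c(\lambda):=\sum_{\square\in\lambda}c(\square)$ (the content sum of $\lambda$) and divide by a further $d!$ coming from the Hurwitz weighting to obtain, for the disconnected analogue,
\[
\HH^{\bullet}_{g,d}=\frac{1}{d!^{2}}\sum_{\lambda\vdash d}\chi^{\lambda}(1)^{2}\,c(\lambda)^{n}.
\]
Conjugation $\lambda\mapsto\lambda'$ negates $c(\lambda)$ while preserving $\chi^{\lambda}(1)$, and $n$ is always even; pairing conjugate partitions and using $|c(\lambda)|\leq\binom{d}{2}$ (saturated by the single row and column) then gives the claimed structure for the disconnected count,
\[
\HH^{\bullet}_{g,d}=\frac{2}{d!^{2}}\sum_{m=1}^{\binom{d}{2}}B^{\bullet}(d,m)\,m^{n},\qquad B^{\bullet}(d,m):=\sum_{\lambda:\,c(\lambda)=m}\chi^{\lambda}(1)^{2}\in\mathbb{Z}_{\geq0},
\]
with $m=0$ contributing nothing once $d\geq 2$.

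To pass from disconnected to connected I would induct on $d$. Decomposing a disconnected cover into its connected components and distributing the labelled branch points gives the exponential formula
\[
\HH^{\bullet}_{g,d}=\HH_{g,d}+\sum_{k\geq 2}\frac{1}{k!}\sum_{\substack{d_{1}+\cdots+d_{k}=d\\ g_{i}\geq 0,\,\sum n_{i}=n}}\binom{n}{n_{1},\ldots,n_{k}}\prod_{i=1}^{k}\HH_{g_{i},d_{i}},
\]
where $n_{i}=2d_{i}+2g_{i}-2$ and each $d_{i}<d$. By the inductive hypothesis each factor is a $\mathbb{Z}$-linear combination of $m_{i}^{n_{i}}$ with $m_{i}\leq\binom{d_{i}}{2}$, and the multinomial identity
\[
\sum_{n_{1}+\cdots+n_{k}=n}\binom{n}{n_{1},\ldots,n_{k}}\prod_{i}m_{i}^{n_{i}}=(m_{1}+\cdots+m_{k})^{n}
\]
assembles each cross term into a $\mathbb{Z}$-linear combination of $M^{n}$ with $M=\sum m_{i}$. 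The crucial numerical observation $\binom{d_{1}}{2}+\cdots+\binom{d_{k}}{2}<\binom{d}{2}$ (valid whenever $k\geq 2$ and $\sum d_{i}=d$, proved by induction from $\binom{a}{2}+\binom{b}{2}=\binom{a+b}{2}-ab$) guarantees every such $M$ lies in $\{1,\ldots,\binom{d}{2}\}$, so subtracting the cross terms from $\HH^{\bullet}_{g,d}$ yields $\HH_{g,d}$ in the required form, with integrality of $B(d,m)$ maintained throughout.

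The main obstacle I foresee is that the multinomial identity as stated sums over \emph{all} non-negative compositions of $n$, while the exponential formula only admits $n_{i}=2d_{i}+2g_{i}-2\geq 2d_{i}-2$ (and of a fixed parity). I would circumvent this by working at the level of exponential generating functions $\phi_{d}(u)=\sum_{g\geq 0}\HH_{g,d}\,u^{2d+2g-2}/(2d+2g-2)!$ and augmenting $\phi^{\bullet}_{d}$ with the $n=0$ contributions from arithmetic-genus $1{-}k$ unions of unramified sheets, so that the clean identity $\sum_{d}\phi^{\bullet}_{d}(u)z^{d}=\exp\bigl(\sum_{d}\phi_{d}(u)z^{d}\bigr)$ holds. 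Products $e^{m_{1}u}\cdots e^{m_{k}u}=e^{(m_{1}+\cdots+m_{k})u}$ then preserve the exponential-sum structure automatically, and extraction of the coefficient of $u^{n}$ recovers the theorem. This EGF framing also provides a robust way to track the integrality of the $B(d,m)$ through the inductive step.
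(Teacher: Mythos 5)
Your route is genuinely different from the paper's. The paper never proves \cref{thm:hurwitz} directly: it obtains it as the $\mu=(1,\dots,1)$ specialisation of \cref{thm:main}, which is proved via connected correlators of $\E$-operators on the infinite wedge space, with the bound $\binom{d}{2}$ coming from the score argument of \cref{lem:score} and integrality coming for free because \cref{alg:connected} produces integer coefficients of monomials in the $q$-integers. You instead use the classical character-theoretic route (essentially that of Hurwitz and of Dubrovin--Yang--Zagier): the Frobenius formula plus the central-character identity $\binom{d}{2}\chi^\lambda(\tau)/\chi^\lambda(1)=\sum_{\square\in\lambda}c(\square)$ give the disconnected statement with $B^\bullet(d,m)=\sum_{c(\lambda)=m}\chi^\lambda(1)^2$, and the exponential formula transfers the structure to the connected count, with $\sum_i\binom{d_i}{2}<\binom{d}{2}$ controlling the exponents. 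This is sound, and your diagnosis of the parity/range issue in the naive multinomial assembly, together with the fix of working with the full EGF identity $1+\sum_d\phi^\bullet_d z^d=\exp\bigl(\sum_d\phi_d z^d\bigr)$ (in which the $n=0$, negative-genus disconnected terms are automatically present, since the character formula already includes them), is exactly right. What your approach buys is elementarity and an explicit representation-theoretic meaning for the disconnected coefficients; what the paper's approach buys is the generalisation to arbitrary $\mu$ and manifest integrality.

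The one step you assert rather than prove is the integrality of $B(d,m)$. Passing from disconnected to connected introduces genuine denominators: extracting the $z^d$-coefficient of the logarithm produces factors $\tfrac{(-1)^{k-1}}{k}$, and in your inductive-subtraction form the cross term for $k$ components carries $\tfrac{1}{k!}$ against the normalisations $\tfrac{2}{d_i!^2}$, so relative to the target normalisation $\tfrac{2}{d!^2}$ a single ordered composition contributes $\tfrac{2^{k-1}}{k!}\binom{d}{d_1,\dots,d_k}^2\prod_i B(d_i,m_i)$, which is not an integer termwise (already $\tfrac{2^{k-1}}{k!}=\tfrac{2}{3}$ for $k=3$). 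So ``integrality maintained throughout'' does not follow from the induction as written. It can be repaired: group the ordered compositions by their underlying multiset, so that the $\tfrac{1}{k!}$ becomes $\tfrac{1}{\prod_v m_v!}$ where $m_v$ are the multiplicities of the distinct part sizes, and then use the standard fact that $\binom{d}{d_1,\dots,d_k}\big/\prod_v m_v!$ (the number of unordered partitions of a $d$-set into blocks of those sizes) is an integer, so that $\binom{d}{d_1,\dots,d_k}^2\big/\prod_v m_v!$ is as well; also treat the degree-one components ($\Phi_1=1$, $m_i=0$) separately from the inductive hypothesis, which only covers $d_i\geq 2$. With that divisibility lemma supplied, your argument is complete; without it, the claim $B(d,m)\in\mathbb{Z}$ is unsupported, which is precisely the part of the statement that the paper's algorithmic proof gets automatically.
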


For example, the enumeration $\HH_{g,5}$ is given by the formula
\[
\frac{2}{5!^2} \left( 10^{2g+8} - 25 \cdot 6^{2g+8} + 16 \cdot 5^{2g+8} - 100 \cdot 4^{2g+8} + 400 \cdot 3^{2g+8} + 600 \cdot 2^{2g+8} - 4000 \cdot 1^{2g+8} \right).
\]

The enumeration of branched covers or factorisations in symmetric groups more generally comes under the umbrella of Hurwitz theory, which has experienced a renaissance in recent decades. Deep relations have been discovered between these so-called Hurwitz numbers and integrability~\cite{oko00}, moduli spaces of curves~\cite{ELSV01}, Gromov--Witten theory of curves~\cite{oko-pan06}, and topological recursion~\cite{bou-mar08, eyn-mul-saf11}.

In the present work, we generalise \cref{thm:hurwitz} to the family of Hurwitz numbers defined below, which enumerate branched covers that have a prescribed ramification profile over one point. Up to possible normalisation factors, these are precisely the Hurwitz numbers that feature in the celebrated ELSV formula~\cite{ELSV01}.\footnote{Elsewhere in the literature, these are sometimes called {\em simple Hurwitz numbers}, where the word ``simple'' refers to simple ramification, and sometimes called {\em single Hurwitz numbers}, where the word ``single'' refers to the ramification profile being prescribed over one point. We will omit these adjectives and simply use the term {\em Hurwitz numbers}.}

\begin{definition} \label{def:hurwitz}
Let $g$ be an integer and $\mu = (\mu_1, \ldots, \mu_n)$ be a tuple of positive integers. The {\em Hurwitz number} $H_{g;\mu}$ is the weighted enumeration of genus $g$ connected branched covers of $\mathbb{CP}^1$ with ramification profile $\mu$ over $\infty \in \mathbb{CP}^1$, simple ramification over $|\mu| + 2g - 2 + n$ prescribed points of $\mathbb{CP}^1$, and no branching elsewhere. The preimages of $\infty \in \mathbb{CP}^1$ are labelled from $1$ to $n$ such that the point labelled $i$ has ramification index~$\mu_i$.

An {\em isomorphism} between branched covers $f: \Sigma \to \mathbb{CP}^1$ and $\tilde{f}: \tilde{\Sigma} \to \mathbb{CP}^1$ is a Riemann surface isomorphism $\phi: \Sigma \to \tilde{\Sigma}$ that satisfies $f = \tilde{f} \circ \phi$ and preserves the labels of the preimages of $\infty \in \mathbb{CP}^1$. Each branched cover is counted with weight equal to the reciprocal of its number of automorphisms.

The {\em disconnected Hurwitz number} $H_{g;\mu}^\bullet$ is obtained by allowing the branched cover to possibly be disconnected.
\end{definition}

Observe that the number of simple ramification points appearing in \cref{def:hurwitz} is determined by the Riemann--Hurwitz formula. The Hurwitz number $H_{g;\mu}$ is zero if $g$ is negative. On the other hand, the disconnected Hurwitz number $H_{g;\mu}^\bullet$ may be non-zero if $g$ is negative, as it is the Euler characteristic, rather than the genus, that is additive over disjoint union.

By a standard monodromy argument using the Riemann existence theorem, we have the following equivalent enumeration of permutations in the symmetric group.\footnote{See the book of Cavalieri and Miles for a down-to-earth explication of this argument, as well as a gentle introduction to Hurwitz theory more generally~\cite{cav-mil16}.}

\begin{proposition} \label{prop:hurwitz}
The Hurwitz number $H_{g;\mu}$ is equal to $\frac{|\mathrm{Aut}(\mu)|}{|\mu|!}$ times the number of tuples $(\tau_1, \tau_2, \ldots, \tau_{|\mu|+2g-2+n})$ of transpositions in the symmetric group $S_{|\mu|}$ that generate a transitive subgroup and whose product has cycle type $\mu$.\footnote{Here and throughout, we write $|\mu|$ for the sum of the terms of the tuple $\mu$. The notation $\mathrm{Aut}(\mu)$ denotes the group of permutations acting on the tuple $\mu = (\mu_1, \ldots, \mu_n)$ that leave it invariant.} The disconnected Hurwitz number $H_{g;\mu}^\bullet$ is obtained by dropping the transitivity constraint.
\end{proposition}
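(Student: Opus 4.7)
The plan is to deploy the standard monodromy correspondence guaranteed by the Riemann existence theorem, taking care to track the combinatorial data that encodes the labelling of the preimages of $\infty$.

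I would first fix a base point $p \in \mathbb{CP}^1$ in the complement of the branch locus and select standard loop generators $\gamma_1, \ldots, \gamma_k, \gamma_\infty$ of $\pi_1(\mathbb{CP}^1 \setminus \{b_1, \ldots, b_k, \infty\}, p)$, each encircling a single branch point once anticlockwise, subject to the relation $\gamma_1 \cdots \gamma_k \gamma_\infty = 1$. Given a cover $f : \Sigma \to \mathbb{CP}^1$ together with a labelling of the fibre $f^{-1}(p)$ by $\{1, \ldots, |\mu|\}$, the monodromy homomorphism $\rho : \pi_1 \to S_{|\mu|}$ sends each $\gamma_i$ to a transposition $\tau_i$ (by the local model at a simple branch point) and sends $\gamma_\infty$ to a permutation $\sigma$ of cycle type $\mu$, whose cycles correspond to the preimages of $\infty$ with lengths equal to the respective ramification indices. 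Connectedness of $\Sigma$ is equivalent to transitivity of the image of $\rho$, which, since $\sigma$ is already a word in the transpositions, is equivalent to the transpositions alone generating a transitive subgroup of $S_{|\mu|}$.

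The key bookkeeping step is to account for the labelling of the preimages of $\infty$. I would enrich each tuple by a bijection $c$ from $\{1, \ldots, n\}$ to the cycles of $\sigma$ with cycle $c(i)$ of length $\mu_i$; such an enrichment records the labelling of $f^{-1}(\infty)$, and for each bare tuple there are exactly $|\mathrm{Aut}(\mu)|$ admissible choices of $c$. On the geometric side, $\mathrm{Aut}(f, \ell)$ acts freely on fibre labellings by rigidity of covers of a connected base, so the number of isomorphism classes of pairs (labelled cover, fibre labelling) equals $\sum_{[f,\ell]} |\mu|!/|\mathrm{Aut}(f,\ell)| = |\mu|! \cdot H_{g;\mu}$. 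The Riemann existence theorem identifies this with the number of enriched tuples, namely $|\mathrm{Aut}(\mu)| \cdot N$ where $N$ is the number of bare tuples; rearranging yields the claimed formula.

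I expect the main point requiring care to be verifying that two fibre labellings of a fixed labelled cover produce the same enriched tuple exactly when they differ by an element of $\mathrm{Aut}(f,\ell)$, so that the counting passes cleanly between the two sides. The disconnected version then follows by running the same argument component-by-component: one drops transitivity on the tuple side, rigidity still applies to each connected component separately, and the usual convention that $g$ may be negative (since it is the Euler characteristic rather than the genus that is additive) accommodates the broader range of $g$.
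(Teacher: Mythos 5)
Your argument is correct and is precisely the standard monodromy/Riemann-existence correspondence that the paper itself invokes without proof (it defers the details to Cavalieri--Miles), including the key bookkeeping: enriched tuples carry $|\mathrm{Aut}(\mu)|$ choices of bijection to the cycles of the permutation at $\infty$, while free actions of $\mathrm{Aut}(f,\ell)$ on fibre labellings give $|\mu|!\cdot H_{g;\mu}$ on the geometric side. So you have simply filled in, correctly, the same proof the paper cites.
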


\begin{example}
The Hurwitz number $H_{g;21}$ is equal to $\frac{1}{3!}$ times the number of tuples $(\tau_1, \tau_2, \ldots, \tau_{2g+3})$ of transpositions in $S_3$ that generate a transitive subgroup and whose product has cycle type $(2,1)$. The product of any $2g+3$ transpositions in $S_3$ is an odd permutation and hence, automatically has cycle type $(2,1)$. The subgroup generated by $2g+3$ transpositions in $S_3$ is transitive as long as the $2g+3$ transpositions are not all identical. Hence, we have $H_{g;21} = \frac{1}{6} (3^{2g+3} - 3)$ for all non-negative integers $g$.
\end{example}

The main theorem of this paper is the following generalisation of \cref{thm:hurwitz}, appearing well over a century after Hurwitz's original work, yet not previously observed in the literature to the best of our knowledge.

\begin{theorem}[Structure of Hurwitz numbers] \label{thm:main}
For fixed $\mu = (\mu_1, \ldots, \mu_n)$ with sum $d \geq 2$, the function $H_{g;\mu}$ can be expressed as follows, where $C(\mu, m) \in \mathbb{Z}$.
\[
H_{g;\mu} = \frac{2}{d! \, \mu_1 \cdots \mu_n} \sum_{1 \leq m \leq \binom{d}{2}} C(\mu, m) \cdot m^{d+2g-2+n}
\]
\end{theorem}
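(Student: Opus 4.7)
The plan is to first establish the structural result for disconnected Hurwitz numbers $H^\bullet_{g;\mu}$ via the Frobenius character formula, and then transfer it to the connected numbers $H_{g;\mu}$ by inductively inverting the standard set-partition relation, while tracking the exponential form throughout.

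\textbf{Step 1: disconnected case.} The Burnside formula for factorisations in $S_d$, combined with the identity $\chi^\lambda(\tau) = p_2(\lambda) \dim\lambda / \binom{d}{2}$ for any transposition $\tau$ (where $p_2(\lambda) = \sum_{\Box \in \lambda} c(\Box)$ is the sum of contents), gives
\[
H_{g;\mu}^\bullet = \frac{1}{d! \, \mu_1 \cdots \mu_n} \sum_{\lambda \vdash d} \dim \lambda \cdot \chi^\lambda(\mu) \cdot p_2(\lambda)^{d+2g-2+n}.
\]
Pairing each $\lambda$ with its conjugate partition $\lambda'$ using $p_2(\lambda') = -p_2(\lambda)$, $\chi^{\lambda'}(\mu) = \operatorname{sgn}(\mu)\chi^\lambda(\mu)$, and the parity identity $(-1)^{d+2g-2+n}\operatorname{sgn}(\mu) = (-1)^{d+n}(-1)^{d-n} = 1$ doubles each contribution from $p_2(\lambda) > 0$, while self-conjugate $\lambda$ have $p_2(\lambda) = 0$ and drop out since $r = d+2g-2+n \geq 1$. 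Using $|p_2(\lambda)| \leq \binom{d}{2}$, with equality iff $\lambda \in \{(d),(1^d)\}$, this yields the disconnected analogue of \cref{thm:main} with integer coefficients $C^\bullet(\mu, m) = \sum_{\lambda:\, p_2(\lambda) = m} \dim \lambda \cdot \chi^\lambda(\mu)$.

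\textbf{Step 2: connected case.} Decomposing a disconnected cover into its connected components gives the exponential-type relation
\[
\sum_g H_{g;\mu}^\bullet \frac{u^r}{r!} = \sum_{[n] = S_1 \sqcup \cdots \sqcup S_k} \prod_{j=1}^k \sum_{g_j} H_{g_j;\, \mu^{(j)}}\, \frac{u^{r_j}}{r_j!},
\]
with $\mu^{(j)} = (\mu_i)_{i \in S_j}$ and $r_j = |\mu^{(j)}| + 2g_j - 2 + |S_j|$. Rescaling to $\widetilde{F}(u;\mu) = d!\, \mu_1 \cdots \mu_n \sum_g H_{g;\mu} u^r/r!$, the class of finite $\mathbb{Z}$-linear combinations $\sum_m a_m e^{um}$ with $a_{-m} = \operatorname{sgn}(\mu) a_m$ and $|m| \leq \binom{d}{2}$ is closed under the operations appearing in the inversion: products use $e^{ua} e^{ub} = e^{u(a+b)}$ and the subadditivity $\binom{d_1}{2} + \binom{d_2}{2} \leq \binom{d_1+d_2}{2}$, while the sign symmetry multiplies because $\operatorname{sgn}(\mu) = \prod_j \operatorname{sgn}(\mu^{(j)})$. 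Induction on $n$, with base case $n=1$ in which $H = H^\bullet$ because a cover with a single preimage of $\infty$ is necessarily connected, then places $\widetilde{F}(u;\mu)$ in this class; extracting the coefficient of $u^r/r!$ and pairing $\pm m$ terms as in Step~1 produces the formula of \cref{thm:main}.

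\textbf{Main obstacle.} The delicate point is that integrality, the symmetry $a_{-m} = \operatorname{sgn}(\mu) a_m$, and the bound $|m| \leq \binom{d}{2}$ must all persist through the inversion simultaneously, with the factor of $2$ emerging cleanly. The infinite wedge approach advocated in the abstract sidesteps this bookkeeping by computing $H_{g;\mu}$ directly as a connected vacuum expectation $\langle \prod_i \mathcal{E}_{\mu_i}(\cdot) \rangle^\circ$; expanding via the commutation relation $[\mathcal{E}_a(z), \mathcal{E}_b(w)] = \varsigma(aw - bz)\, \mathcal{E}_{a+b}(z+w)$ decomposes the correlator into a sum over connected combinatorial diagrams, each of which manifestly contributes an integer combination of exponentials $m^r$ with the required bound, and this is where I expect the paper's main technical work to lie.
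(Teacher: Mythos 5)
Your proposal is correct, but it takes a genuinely different route from the paper. The paper works entirely on the infinite wedge space: it writes $F_\mu(\h)$ as a connected correlator of $\E$-operators (\cref{prop:conhurwitzwedge}), evaluates it with the explicit commutation algorithm (\cref{alg:connected}), and controls the output via \cref{lem:terms} (each surviving term is a product of $d+n-2$ $q$-integers) together with the score argument of \cref{lem:score}, which bounds $k_1+\cdots+k_\ell$ by $d(d-1)$ and fixes its parity; expanding the $q$-integers and pairing opposite sign choices then yields the linear combination of exponentials. You instead prove the disconnected statement directly from the Burnside--Frobenius formula, the content-sum evaluation of the central character of a transposition, and the conjugation symmetry $\lambda \mapsto \lambda'$, and then transfer to the connected numbers by inverting the set-partition relation (the paper's \cref{eq:Finclusionexclusion}) by induction on $n$: the multinomial factor $d!/\prod_j d_j!$ preserves integrality, the symmetry $a_{-m}=\operatorname{sgn}(\mu)\,a_m$ is multiplicative over blocks, the exponent bound persists because $\binom{d_1}{2}+\binom{d_2}{2}\leq\binom{d_1+d_2}{2}$ (this is superadditivity rather than subadditivity, but the inequality you invoke is the correct one), the $m=0$ term is harmless since $d+2g-2+n\geq 1$ for $d\geq 2$, and the final $\pm m$ pairing with $\operatorname{sgn}(\mu)(-1)^{d+2g-2+n}=1$ produces the factor of $2$; your base case $n=1$ matches the paper's own observation that such covers are automatically connected. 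What each approach buys: yours is more elementary and gives a closed character-theoretic formula $C^\bullet(\mu,m)=\sum_{p_2(\lambda)=m}\dim\lambda\cdot\chi^\lambda(\mu)$ for the disconnected coefficients, and since the splitting terms only reach exponents at most $\sum_j\binom{d_j}{2}<\binom{d}{2}$, it also delivers $C(\mu,\binom{d}{2})=1$ and hence the asymptotics of \cref{thm:asymptotics} almost for free; the paper's wedge-space route works with connected objects from the outset, produces the explicit $q$-integer polynomial expressions of \cref{tab:correlators}, and adapts immediately to correlators of $\E_r(r\h)$ and thus to the orbifold generalisation of \cref{thm:orbifold}, which is less transparent from the character-theoretic side.
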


Observe that Hurwitz's result is recovered by setting $\mu = (1, 1, \ldots, 1)$, in which case we have the equality $\HH_{g,d} = \frac{1}{d!} H_{g;11\cdots 1}$. The factor $\frac{1}{d!}$ arises due to the labelling of the preimages of $\infty \in \mathbb{CP}^1$ in \cref{def:hurwitz}. These labels could be removed from the definition at the expense of introducing factors of $\frac{1}{|\mathrm{Aut}(\mu)|}$ in various places.

Our proof of \cref{thm:main} fundamentally uses the infinite wedge space, also known in the literature as the fermionic Fock space. It thus differs substantially from the original proofs of \cref{thm:hurwitz} due to Hurwitz, who did not have access to this mathematical technology~\cite{hur1891, hur1901}.

For fixed $\mu = (\mu_1, \ldots, \mu_n)$, we introduce the exponential generating function
\[
F_\mu(\h) = \sum_{g=0}^\infty H_{g;\mu} \, \frac{\h^{|\mu|+2g-2+n}}{(|\mu|+2g-2+n)!}.
\]
The generating function $F_\mu(\h)$ can be expressed as a connected correlator of a product of $\E$-operators on the infinite wedge space. These $\E$-operators were introduced by Okounkov and Pandharipande in the context of Gromov--Witten theory of curves~\cite{oko-pan06} and were previously used to encode Hurwitz numbers as certain inner products on the infinite wedge space~\cite{joh15}. We prove that such connected correlators can be algorithmically and explicitly evaluated to polynomials in the $q$-integers $[1], [2], [3], \ldots$, which are defined by the formula\footnote{These $q$-integers should not be confused with other distinct, yet closely related, notions of $q$-integers that appear in the literature.}
\begin{equation} \label{eq:qinteger}
[k] = q^{k/2} - q^{-k/2} = e^{k\h/2} - e^{-k\h/2}, \qquad \text{where } q = e^\h.
\end{equation}
A careful analysis of the combinatorics of connected correlators then leads to the proof of \cref{thm:main}. The notions of infinite wedge space, $\E$-operators and connected correlators will be explained in due course.

The structure of Hurwitz numbers with fixed genus and varying ramification profile is essentially captured by the celebrated ELSV formula~\cite{ELSV01}. It states that
\[
H_{g;(\mu_1, \ldots, \mu_n)} = (|\mu|+2g-2+n)! \, \prod_{i=1}^n \frac{\mu_i^{\mu_i}}{\mu_i!} \, P_{g,n}(\mu_1, \ldots, \mu_n),
\]
where $P_{g,n}$ is a symmetric polynomial of degree $3g-3+n$ whose coefficients are Hodge integrals on the moduli space of curves $\overline{\mathcal M}_{g,n}$. On the other hand, \cref{thm:main} describes the structure of Hurwitz numbers with fixed ramification profile and varying genus. As a result, it allows for the explicit calculation of Hurwitz numbers in large genus and leads to the following asymptotic result.

\begin{theorem}[Large genus asymptotics of Hurwitz numbers] \label{thm:asymptotics}
For fixed $\mu = (\mu_1, \ldots, \mu_n)$ with sum $d$, we have
\[
H_{g;\mu} \sim \frac{2}{d! \, \mu_1 \cdots \mu_n} \binom{d}{2}^{d+2g-2+n} \quad \text{as } g \to \infty.
\]
\end{theorem}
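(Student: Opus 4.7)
The plan is to deduce the asymptotic directly from \cref{thm:main} by pinning down the leading coefficient. Writing $L = d+2g-2+n$, \cref{thm:main} rearranges to
\[
\frac{d!\,\mu_1\cdots\mu_n}{2\binom{d}{2}^L}\, H_{g;\mu} = C\!\left(\mu, \tbinom{d}{2}\right) + \sum_{m=1}^{\binom{d}{2}-1} C(\mu, m)\left(\frac{m}{\binom{d}{2}}\right)^{\!L}.
\]
The sum is finite, the integer coefficients $C(\mu, m)$ are fixed (independent of $g$), and each ratio $m/\binom{d}{2}$ lies in $[0,1)$. Hence the right-hand side converges to $C(\mu, \binom{d}{2})$ as $g \to \infty$, and the asymptotic reduces to the identity $C(\mu, \binom{d}{2}) = 1$.

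To identify this coefficient, I would apply the Frobenius formula to the permutation count in \cref{prop:hurwitz} for the disconnected version. Using the standard identity $|\mathrm{Aut}(\mu)|\cdot|C_\mu| = d!/(\mu_1 \cdots \mu_n)$ together with the fact that the normalised character $\chi^\lambda(C_{(2)})/\dim\lambda$ equals $c(\lambda)/\binom{d}{2}$, where $c(\lambda) = \sum_{(i,j)\in\lambda}(j-i)$ denotes the content sum, one obtains
\[
H^\bullet_{g;\mu} = \frac{1}{d!\,\mu_1\cdots\mu_n}\sum_{\lambda \vdash d} \dim(\lambda)\, \chi^\lambda(\mu)\, c(\lambda)^L.
\]
The extremal partitions $(d)$ and $(1^d)$ each have $\dim\lambda = 1$ and $|c(\lambda)| = \binom{d}{2}$, with characters $\chi^{(d)}(\mu) = 1$ and $\chi^{(1^d)}(\mu) = (-1)^{d-n}$. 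Since $L + d - n = 2d + 2g - 2$ is even, these two contributions reinforce to $\frac{2}{d!\,\mu_1\cdots\mu_n}\binom{d}{2}^L$. It is a classical fact that every other partition of $d$ satisfies $|c(\lambda)| < \binom{d}{2}$ strictly (the content sum is uniquely maximised and minimised by the one-row and one-column shapes), so the remaining terms are $o(\binom{d}{2}^L)$. Therefore $H^\bullet_{g;\mu} \sim \frac{2}{d!\,\mu_1\cdots\mu_n}\binom{d}{2}^L$.

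The final step is to transfer this asymptotic from disconnected to connected. The exponential formula expresses $H^\bullet_{g;\mu} - H_{g;\mu}$ as a finite sum of products $\prod_i H_{g_i;\mu^{(i)}}$ indexed by set partitions $\mu = \mu^{(1)}\sqcup\cdots\sqcup\mu^{(k)}$ with $k \geq 2$, together with a multinomial distributing the simple branch points among components (the $g_i$ being constrained by the Euler-characteristic version of Riemann--Hurwitz). A short bookkeeping argument, combined with the same structural bound applied to each $H_{g_i;\mu^{(i)}}$, shows that each such product grows at most like $\binom{d^*}{2}^L$ with $d^* = \max_i |\mu^{(i)}| \leq d - 1$, so the correction is $o(\binom{d}{2}^L)$. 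Combining the three steps yields $C(\mu, \binom{d}{2}) = 1$ and hence the stated asymptotic. I expect the main technical hurdle to be this final disconnected-to-connected bookkeeping, together with a clean statement of the strict content-sum inequality for all non-extremal $\lambda$; both are essentially routine but deserve careful attention.
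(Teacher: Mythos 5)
Your argument is correct, but it reaches the key identity $C(\mu,\binom{d}{2})=1$ by a genuinely different route than the paper. Both proofs start the same way, reducing the asymptotic to that identity via the finiteness of the sum in \cref{thm:main}. The paper then stays entirely inside the infinite wedge formalism: by \cref{lem:score}, a term of maximal score $d(d-1)$ can only be produced by always taking the commutation term in \cref{alg:connected}, so there is exactly one such term, it is automatically connected, and its coefficient is $\pm 1$, with the sign fixed by positivity of Hurwitz numbers. You instead go back to the Burnside--Frobenius character formula for the disconnected count, identify the leading behaviour with the two extremal partitions $(d)$ and $(1^d)$ via the content-sum evaluation of the transposition central character (the strict inequality $|c(\lambda)|<\binom{d}{2}$ for all other shapes is indeed classical, e.g.\ from $c(\lambda)=\sum_i\binom{\lambda_i}{2}-\sum_j\binom{\lambda'_j}{2}$), check that the parity of $L=d+2g-2+n$ makes the two contributions reinforce, and then transfer from disconnected to connected via the exponential formula, using \cref{thm:main} itself to bound each connected factor. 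That last step works, though your stated bound $\binom{d^*}{2}^L$ is slightly off: after summing the multinomial distribution of branch points over components one gets $\bigl(\sum_i\binom{d_i}{2}\bigr)^L$, which can exceed $\binom{d^*}{2}^L$; since $\sum_i\binom{d_i}{2}\leq\binom{d-1}{2}<\binom{d}{2}$ whenever there are at least two components, the correction is still exponentially negligible, so the imprecision is harmless. The trade-off: the paper's argument is shorter given the score machinery already in place and stays self-contained within its own framework, whereas yours is independent of \cref{lem:score} (using the structure theorem only as a crude bound), rests on classical representation theory, and as a by-product gives the disconnected asymptotics and an explicit exact character expansion from which sharper error terms could be read off.
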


It is natural to seek other enumerative problems whose structure is similar to that of Hurwitz numbers, for fixed partition and varying genus. For example, we prove that orbifold Hurwitz numbers satisfy essentially the same structure. For a fixed positive integer $r$, the $r$-orbifold Hurwitz numbers are defined similarly to Hurwitz numbers, but with an extra point with ramification profile of the form $(r, r, \ldots, r)$. They arise in the enumerative geometry of the orbifold $\mathbb{CP}^1[r]$~\cite{BHLM14, do-lei-nor16, joh-pan-tse11}. We also consider the case of monotone Hurwitz numbers, which are defined similarly to Hurwitz numbers, but with an additional monotonicity constraint. On the basis of strong numerical evidence, we conjecture that they can be expressed as a linear combination of exponentials plus a linear term, thus bearing a strong semblance to the structure of Hurwitz numbers, but with an added twist. Monotone Hurwitz numbers were introduced by Goulden, Guay-Paquet and Novak, who realised that they arise as coefficients in the large $N$ expansion of the HCIZ matrix integral over the unitary group $U(N)$~\cite{gou-gua-nov14}.

There may be many enumerative problems that exhibit the same, or similar, structure to that shown in \cref{thm:main} for Hurwitz numbers. We propose that the topological recursion may be a source of natural examples worthy of further investigation~\cite{che-eyn06, eyn-ora07}.

The structure of the remainder of the paper is as follows.

\begin{itemize}
\item In \cref{sec:wedge}, we provide a minimal introduction to the infinite wedge space and define the $\E$-operators (\cref{def:Eoperators}). We present two key facts on which our subsequent analysis heavily rests. The first is the expression of generating functions for Huriwtz numbers as correlators of $\E$-operators (\cref{prop:hurwitzwedge}) and the second is the commutation relation for the $\E$-operators (\cref{lem:Ecommutation}).

\item In \cref{sec:proof}, we introduce the notion of connected correlators for $\E$-operators (\cref{def:concorrelator}) and describe an explicit algorithm to compute them (\cref{alg:connected}). A thorough combinatorial analysis of this algorithm leads to a proof of the main theorem on the structure of Hurwitz numbers with fixed ramification profile and varying genus (\cref{thm:main}).

\item In \cref{sec:extras}, we use our main theorem to prove a result concerning the large genus asymptotics of Hurwitz numbers (\cref{thm:asymptotics}). We then briefly introduce orbifold Hurwitz numbers and prove that they satisfy essentially the same structure as the usual Hurwitz numbers (\cref{thm:orbifold}). We ask whether there are other enumerative problems with analogous structure and propose the monotone Hurwitz numbers as a natural candidate. We present numerical evidence to support our conjecture that they can be expressed as a linear combination of exponentials plus a linear term (\cref{con:monotone}).
\end{itemize}

\section{Hurwitz numbers via the infinite wedge space} \label{sec:wedge}

In this section, we describe how generating functions for disconnected Hurwitz numbers arise as correlators of $\E$-operators on the infinite wedge space. We provide a minimal, non-standard presentation of the infinite wedge space and the $\E$-operators acting on it. This is followed by statements of known facts that will be required later in the paper, devoid of any proofs. The reader seeking a more pedagogical exposition and further details is encouraged to consult the literature, particularly the paper of Johnson~\cite{joh15}.

\subsection{The infinite wedge space}

Let $\mathcal{P}$ denote the set of integer partitions, including the empty partition $\emptyset$. We think of partitions via their Young diagrams drawn in the so-called Russian orientation, as shown in~\cref{fig:wedge}. A partition $\lambda$ can be expressed in Frobenius notation as $(\alpha_1, \alpha_2, \ldots, \alpha_d \,|\, \beta_1, \beta_2, \ldots, \beta_d)$, where $d$ denotes the number of boxes on the ``diagonal'' of the Young diagram, $\alpha_i$ denotes the number of boxes to the right of the $i$th box on the diagonal that lie in the same ``row'', and $\beta_i$ denotes the number of boxes to the left of the $i$th box on the diagonal that lie in the same ``column''.

\begin{figure}[ht!]
\centering
\begin{tikzpicture}
\def\r{0.75}
\foreach \x in {(0,0), (\r,\r), (2*\r,2*\r), (3*\r,3*\r), (-\r,\r), (0,2*\r), (\r,3*\r)}
	\draw[shift = \x] (0,0) -- (\r,\r) -- (0,2*\r) -- (-\r,\r) -- cycle;
\foreach \x in {-5,-4,...,0}
	{\draw[dashed] (\x*\r,0) -- (\x*\r,-\x*\r);
	\node[below] at (\x*\r,0) {$\x$};}
\foreach \x in {1,2,...,5}
	{\draw[dashed] (\x*\r,0) -- (\x*\r,\x*\r);
	\node[below] at (\x*\r,0) {$\x$};}
\draw[thick, arrows = -Stealth] (0,0) -- (5*\r,5*\r);
\draw[thick, arrows = -Stealth] (0,0) -- (-5*\r,5*\r);
\draw[arrows = Stealth-Stealth] (-6*\r,0) -- (6*\r,0);
\node[rotate = 45, gray, left] at (5*\r,6*\r) {``rows''};
\node[rotate = -45, gray, right] at (-5*\r,6*\r) {``columns''};
\end{tikzpicture}
\caption{Depicted here is the Young diagram of the partition $(4,3)$, which can be expressed in Frobenius notation as $(3, 1 \,|\, 1, 0)$. Note that the diagram is situated with its lowest corner at the origin and is scaled so that each box has area 2.} \label{fig:wedge}
\end{figure}
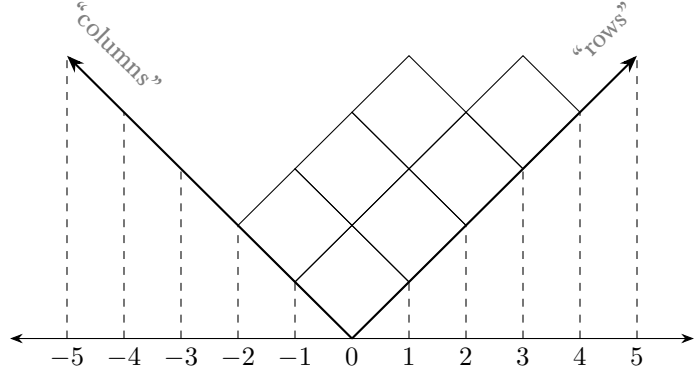

Define the {\em infinite wedge space} to be the graded vector space
\[
\Lambda = \mathrm{span}_\mathbb{C} \{ v_\lambda \,|\, \lambda \in \mathcal{P} \},
\]
where $v_\lambda$ has degree $|\lambda|$. Equip $\Lambda$ with the inner product that makes $\{ v_\lambda \,|\, \lambda \in \mathcal{P} \}$ an orthonormal basis.\footnote{To be precise, we have defined here the charge zero subspace of the infinite wedge space. It is called the infinite wedge space since the vector $v_\lambda$ can be represented as $s_{\lambda_1-\frac{1}{2}} \wedge s_{\lambda_2-\frac{3}{2}} \wedge s_{\lambda_3-\frac{5}{2}} \wedge \cdots$, an infinite wedge of vectors in $\mathrm{span}_\mathbb{C} \{ s_k \,|\, k \in \mathbb{Z} + \frac{1}{2} \}$.}

It is customary to adopt the bra-ket notation in which the basis vectors are written as $v_\lambda = | \, \lambda \, \rangle$ and their dual vectors as $\langle \, \lambda \, |$. For an operator $\mathcal{F}: \Lambda \to \Lambda$, we will write
\[
\langle \, \mathcal{F} \, \rangle = \langle \, \emptyset \,|\, \mathcal{F} \,|\, \emptyset \, \rangle.
\]
These inner products are often referred to as {\em correlators} or {\em vacuum expectation values}, terminology that derives from physics. Such expressions commonly arise in enumerative geometry~\cite{oko-pan06} and integrable systems~\cite{miw-jim-dat00}.

\subsection{The \texorpdfstring{$\E$}{E}-operators}

Disconnected Hurwitz numbers arise as certain correlators on the infinite wedge space. The crucial ingredients we require are the $\E$-operators, which were introduced by Okounkov and Pandharipande in the context of Gromov--Witten theory of curves~\cite{oko-pan06}. We provide here a combinatorial definition for the $\E$-operators, which requires the following terminology concerning Young diagrams.
\begin{itemize}
\item For $m$ a positive integer, an {\em $m$-ribbon} of a Young diagram is a connected set of $m$ squares that contains no $2 \times 2$ subsquare and whose removal leaves a Young diagram. 
\item The {\em height} of a ribbon is the number of rows that it occupies minus 1.
\item The {\em sign} of a ribbon is $-1$ to the power of its height.
\item The {\em centre} of a ribbon is the $x$-coordinate of its centre of mass, which is an element of $\frac{1}{2} \mathbb{Z}$.
\end{itemize}

\begin{definition} \label{def:Eoperators}
For $m \in \mathbb{Z}$ and $z$ a formal variable, define the linear operator $\E_m(z): \Lambda \to \Lambda$ on basis vectors as follows.
\begin{itemize}
\item For $m < 0$, $\E_m(z) \, v_\lambda$ is a linear combination of the vectors $v_\mu$, where $\mu$ is obtained by adding an $|m|$-ribbon to $\lambda$. The coefficient of $v_\mu$ is equal to the sign of the ribbon multiplied by $e^{cz}$, where $c$ is the centre of the ribbon.
\item For $m > 0$, $\E_m(z) \, v_\lambda$ is a linear combination of the vectors $v_\mu$, where $\mu$ is obtained by removing an $m$-ribbon from $\lambda$. The coefficient of $v_\mu$ is equal to the sign of the ribbon multiplied by $e^{cz}$, where $c$ is the centre of the ribbon.
\item For $m = 0$, $\E_0(z)$ is diagonal with respect to the basis $\{ v_\lambda \,|\,\lambda \in \mathcal{P} \}$. The eigenvalue of $\E_0(z)$ corresponding to $v_\lambda$ is 
\[
\sum_{i=1}^d \Big( e^{(\alpha_i+1/2)z} - e^{-(\beta_i+1/2)z} \Big) + \frac{1}{e^{z/2} - e^{-z/2}},
\]
where $\lambda$ is expressed in Frobenius notation as $(\alpha_1, \alpha_2, \ldots, \alpha_d \,|\, \beta_1, \beta_2, \ldots, \beta_d)$.
\end{itemize}
\end{definition}

\begin{figure}[ht!]
\centering
\begin{tikzpicture}
\def\r{0.2}
\def\d{0.4}
\def\x{84}
\def\y{80}
% YOUNG DIAGRAM A1
\begin{scope}[xshift = 0, yshift = \y]
\foreach \x in {(0,0), (\r,\r), (2*\r,2*\r), (3*\r,3*\r), (-\r,\r), (0,2*\r), (\r,3*\r)}
	\draw[shift = \x] (0,0) -- (\r,\r) -- (0,2*\r) -- (-\r,\r) -- cycle;
\foreach \x in {(-4*\r,4*\r), (-3*\r,3*\r), (-2*\r,2*\r)}
	\filldraw[fill = green!20, shift = \x] (0,0) -- (\r,\r) -- (0,2*\r) -- (-\r,\r) -- cycle;
\draw[thick, arrows = -Stealth] (0,0) -- (6*\r,6*\r);
\draw[thick, arrows = -Stealth] (0,0) -- (-6*\r,6*\r);
\node at (0,-\d) {$\text{sign} = 1$};
\node at (0,-2*\d) {$\text{centre} = -3$};
\end{scope}
% YOUNG DIAGRAM A2
\begin{scope}[xshift = \x, yshift = \y]
\foreach \x in {(0,0), (\r,\r), (2*\r,2*\r), (3*\r,3*\r), (-\r,\r), (0,2*\r), (\r,3*\r)}
	\draw[shift = \x] (0,0) -- (\r,\r) -- (0,2*\r) -- (-\r,\r) -- cycle;
\foreach \x in {(-3*\r,3*\r), (-2*\r,2*\r), (-\r,3*\r)}
	\filldraw[fill = green!20, shift = \x] (0,0) -- (\r,\r) -- (0,2*\r) -- (-\r,\r) -- cycle;
\draw[thick, arrows = -Stealth] (0,0) -- (6*\r,6*\r);
\draw[thick, arrows = -Stealth] (0,0) -- (-6*\r,6*\r);
\node at (0,-\d) {$\text{sign} = -1$};
\node at (0,-2*\d) {$\text{centre} = -2$};
\end{scope}
% YOUNG DIAGRAM A3
\begin{scope}[xshift = 2*\x, yshift = \y]
\foreach \x in {(0,0), (\r,\r), (2*\r,2*\r), (3*\r,3*\r), (-\r,\r), (0,2*\r), (\r,3*\r)}
	\draw[shift = \x] (0,0) -- (\r,\r) -- (0,2*\r) -- (-\r,\r) -- cycle;
\foreach \x in {(-2*\r,2*\r), (-\r,3*\r), (0,4*\r)}
	\filldraw[fill = green!20, shift = \x] (0,0) -- (\r,\r) -- (0,2*\r) -- (-\r,\r) -- cycle;
\draw[thick, arrows = -Stealth] (0,0) -- (6*\r,6*\r);
\draw[thick, arrows = -Stealth] (0,0) -- (-6*\r,6*\r);
\node at (0,-\d) {$\text{sign} = 1$};
\node at (0,-2*\d) {$\text{centre} = -1$};
\end{scope}
% YOUNG DIAGRAM A4
\begin{scope}[xshift = 3*\x, yshift = \y]
\foreach \x in {(0,0), (\r,\r), (2*\r,2*\r), (3*\r,3*\r), (-\r,\r), (0,2*\r), (\r,3*\r)}
	\draw[shift = \x] (0,0) -- (\r,\r) -- (0,2*\r) -- (-\r,\r) -- cycle;
\foreach \x in {(2*\r,4*\r), (3*\r,5*\r), (4*\r,4*\r)}
	\filldraw[fill = green!20, shift = \x] (0,0) -- (\r,\r) -- (0,2*\r) -- (-\r,\r) -- cycle;
\draw[thick, arrows = -Stealth] (0,0) -- (6*\r,6*\r);
\draw[thick, arrows = -Stealth] (0,0) -- (-6*\r,6*\r);
\node at (0,-\d) {$\text{sign} = -1$};
\node at (0,-2*\d) {$\text{centre} = 3$};
\end{scope}
% YOUNG DIAGRAM A5
\begin{scope}[xshift = 4*\x, yshift = \y]
\foreach \x in {(0,0), (\r,\r), (2*\r,2*\r), (3*\r,3*\r), (-\r,\r), (0,2*\r), (\r,3*\r)}
	\draw[shift = \x] (0,0) -- (\r,\r) -- (0,2*\r) -- (-\r,\r) -- cycle;
\foreach \x in {(4*\r,4*\r), (5*\r,5*\r), (6*\r,6*\r)}
	\filldraw[fill = green!20, shift = \x] (0,0) -- (\r,\r) -- (0,2*\r) -- (-\r,\r) -- cycle;
\draw[thick, arrows = -Stealth] (0,0) -- (6*\r,6*\r);
\draw[thick, arrows = -Stealth] (0,0) -- (-6*\r,6*\r);
\node at (0,-\d) {$\text{sign} = 1$};
\node at (0,-2*\d) {$\text{centre} = 5$};
\end{scope}
% YOUNG DIAGRAM B1
\begin{scope}[xshift = \x]
\foreach \x in {(0,0), (\r,\r), (2*\r,2*\r), (3*\r,3*\r), (-\r,\r), (0,2*\r), (\r,3*\r)}
	\draw[shift = \x] (0,0) -- (\r,\r) -- (0,2*\r) -- (-\r,\r) -- cycle;
\foreach \x in {(-\r,\r), (0,2*\r), (\r,3*\r)}
	\filldraw[fill = red!20, shift = \x] (0,0) -- (\r,\r) -- (0,2*\r) -- (-\r,\r) -- cycle;
\draw[thick, arrows = -Stealth] (0,0) -- (6*\r,6*\r);
\draw[thick, arrows = -Stealth] (0,0) -- (-6*\r,6*\r);
\node at (0,-\d) {$\text{sign} = 1$};
\node at (0,-2*\d) {$\text{centre} = 0$};
\end{scope}
% YOUNG DIAGRAM B2
\begin{scope}[xshift = 3*\x]
\foreach \x in {(0,0), (\r,\r), (2*\r,2*\r), (3*\r,3*\r), (-\r,\r), (0,2*\r), (\r,3*\r)}
	\draw[shift = \x] (0,0) -- (\r,\r) -- (0,2*\r) -- (-\r,\r) -- cycle;
\foreach \x in {(\r,3*\r), (2*\r,2*\r), (3*\r,3*\r)}
	\filldraw[fill = red!20, shift = \x] (0,0) -- (\r,\r) -- (0,2*\r) -- (-\r,\r) -- cycle;
\draw[thick, arrows = -Stealth] (0,0) -- (6*\r,6*\r);
\draw[thick, arrows = -Stealth] (0,0) -- (-6*\r,6*\r);
\node at (0,-\d) {$\text{sign} = -1$};
\node at (0,-2*\d) {$\text{centre} = 2$};
\end{scope}
\end{tikzpicture}
\caption{The top row shows the five Young diagrams that can be obtained by adding a 3-ribbon to the Young diagram for the partition $(4,3)$. The bottom row shows the two Young diagrams that can be obtained by removing a 3-ribbon from the Young diagram for the partition $(4,3)$. In each case, the sign and the centre of the ribbon have been indicated.}
\label{fig:ribbons}
\end{figure}
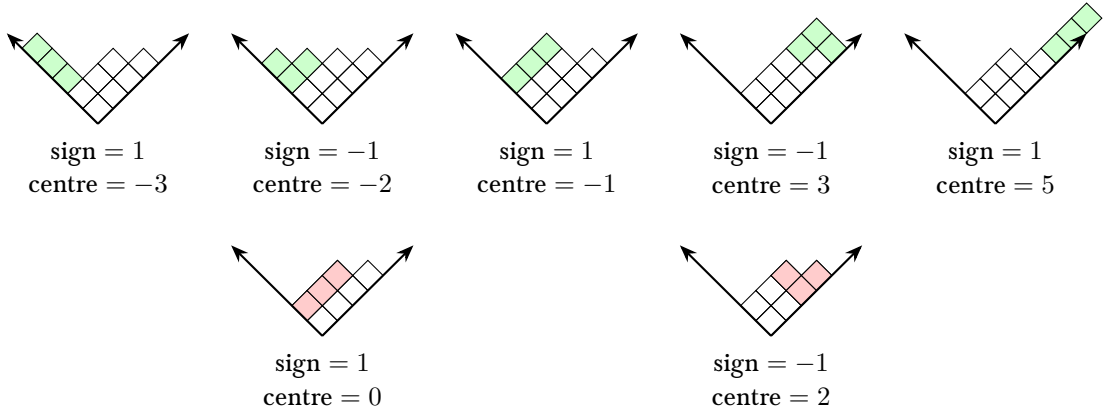

\begin{example}
One can read off $\E_{-3}(z) \, v_{43}$ and $\E_3(z) \, v_{43}$ from the information in the top and bottom rows of \cref{fig:ribbons}, respectively.
\begin{align*}
\E_{-3}(z) \, v_{43} &= e^{-3z} \, v_{43111} - e^{-2z} \, v_{4321} + e^{-z} \, v_{433} - e^{3z} \, v_{55} + e^{5z} \, v_{73} \\
\E_3(z) \, v_{43} &= v_{4} - e^{2z} \, v_{22}
\end{align*}
One can calculate $\E_0(z) \, v_{43}$ by expressing the partition $(4,3)$ in Frobenius notation as $(3, 1 \,|\, 1, 0)$.
\[
\E_0(z) \, v_{43} = \Big( e^{7z/2} + e^{3z/2} - e^{-z/2} - e^{-3z/2} + \frac{1}{e^{z/2} - e^{-z/2}} \Big) \, v_{43}
\]
\end{example}

It is immediate from \cref{def:Eoperators} that the operator $\E_m(z)$ is homogeneous of degree $-m$ with respect to the grading on $\Lambda$. As a consequence, we have the following simple evaluation for the correlator of one $\E$-operator.
\begin{equation} \label{eq:Eevaluation}
\langle \, \E_a(b\h) \, \rangle = \frac{\delta_{a,0}}{[b]}
\end{equation}

Our calculation of Hurwitz numbers via the infinite wedge space hinges on two key facts: the expression of generating functions for Hurwitz numbers as correlators of $\E$-operators and a remarkable commutation relation for the $\E$-operators. We conclude the section with statements of these results.

\begin{proposition}[Johnson~\cite{joh15}] \label{prop:hurwitzwedge}
For fixed $\mu = (\mu_1, \ldots, \mu_n)$, the exponential generating function for disconnected Hurwitz numbers\footnote{The use of $\h$ as the formal parameter here is due to the very loose analogy with Planck's constant in quantum physics and the relation $q = e^\h$ appearing in the definition of the $q$-integers of \cref{eq:qinteger} and elsewhere in quantum mathematics.}
\[
F_\mu^\bullet(\h) = \sum_{g = -\infty}^\infty H_{g;\mu}^\bullet \, \frac{\h^{|\mu|+2g-2+n}}{(|\mu|+2g-2+n)!}
\]
can be expressed as the correlator
\[
F_\mu^\bullet(\h) = \frac{1}{|\mu|! \, \mu_1 \cdots \mu_n} \, \langle \, \E_1(\h)^{|\mu|} \, \E_{-\mu_1}(0) \cdots \E_{-\mu_n}(0) \, \rangle.
\]
\end{proposition}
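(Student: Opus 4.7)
The plan is to reduce the stated correlator to a classical character sum and match it term by term against Frobenius's formula for the Hurwitz numbers. First, using \cref{prop:hurwitz} together with the Frobenius formula for the number of factorisations in a conjugacy class, I would derive
\[
H_{g;\mu}^\bullet = \frac{1}{d! \, \mu_1 \cdots \mu_n} \sum_{\lambda \vdash d} f_2(\lambda)^r \, \chi^\lambda(\mu) \, \dim \lambda,
\]
where $d = |\mu|$, $r = d + 2g - 2 + n$, and $f_2(\lambda) = \sum_{(i,j) \in \lambda} (j - i)$ is the content sum, using the classical identity $\binom{d}{2} \chi^\lambda(\tau)/\dim \lambda = f_2(\lambda)$ for any transposition $\tau$. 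The prefactor $1/(d! \, \mu_1 \cdots \mu_n)$ emerges from combining the $|\mathrm{Aut}(\mu)|/d!$ of \cref{prop:hurwitz} with the class size $|C_\mu| = d!/(|\mathrm{Aut}(\mu)| \prod_i \mu_i)$ and the transposition class size $\binom{d}{2}$.

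Next I would supply matching evaluations of matrix elements on the infinite wedge $\Lambda$. The Murnaghan--Nakayama rule, translated into the infinite-wedge language, gives
\[
\langle \, \lambda \, | \, \E_{-\mu_1}(0) \cdots \E_{-\mu_n}(0) \, | \, \emptyset \, \rangle = \chi^\lambda(\mu),
\]
because at $z = 0$ each $\E_{-k}(0)$ is precisely the signed ribbon-addition operator appearing in that rule, while the class-function property of $\chi^\lambda$ makes the ordering of the parts of $\mu$ immaterial. Dually,
\[
\langle \, \emptyset \, | \, \E_1(\h)^{d} \, | \, \lambda \, \rangle = \dim \lambda \cdot e^{f_2(\lambda)\h},
\]
since $\E_1(\h)$ removes a single box with sign $+1$ and weight $e^{c\h}$; iterated box-removal corresponds to standard Young tableaux of $\lambda$ read in reverse, the exponential weights telescope to $e^{f_2(\lambda)\h}$ independently of the removal order, and the number of such orderings is $\dim \lambda$.

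Inserting the resolution of identity $\sum_\lambda |\, \lambda\, \rangle \langle \, \lambda\, |$ between the raising and lowering blocks then gives
\[
\langle \, \E_1(\h)^d \, \E_{-\mu_1}(0) \cdots \E_{-\mu_n}(0) \, \rangle = \sum_{\lambda \vdash d} \dim \lambda \cdot \chi^\lambda(\mu) \cdot e^{f_2(\lambda)\h},
\]
and expanding the exponential and extracting the coefficient of $\h^r/r!$ reproduces exactly the character formula of the first step. The remaining subtlety is to match summation ranges: the correlator expands over all $r \geq 0$, whereas $r = d + 2g - 2 + n$ has fixed parity. The wrong-parity terms vanish enumeratively, because a product of $r$ transpositions has sign $(-1)^r$, forcing $r \equiv d - n \pmod 2$; and also algebraically, via the involution $\lambda \mapsto \lambda^T$, which negates $f_2(\lambda)$ and multiplies $\chi^\lambda(\mu)$ by $(-1)^{d-n}$. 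The main obstacle is thus careful bookkeeping of signs and class-size factors rather than any essential technical difficulty; once these align, the identity of formal power series follows.
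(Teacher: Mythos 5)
Your proposal is correct and follows essentially the same route the paper sketches (and defers to Johnson~\cite{joh15}): the Burnside--Frobenius character formula for factorisations into transpositions, the Murnaghan--Nakayama interpretation of the operators $\E_{-\mu_i}(0)$ acting by signed ribbon addition to produce $\chi^\lambda(\mu)$, and the identification of $\E_1(\h)$ as the deformation encoding the central character of transpositions via $\langle \, \emptyset \, | \, \E_1(\h)^d \, | \, \lambda \, \rangle = \dim\lambda \cdot e^{f_2(\lambda)\h}$. Your version simply fills in the details (class-size bookkeeping, insertion of the resolution of identity, and the parity/transpose argument) that the paper leaves to the literature.
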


\begin{lemma}[Okounkov and Pandharipande~\cite{oko-pan06}] \label{lem:Ecommutation}
The $\E$-operators satisfy the commutation relation
\[
[\E_{a_1}(b_1\h), \E_{a_2}(b_2\h)] = [a_1b_2-a_2b_1] \, \E_{a_1+a_2}((b_1+b_2)\h).
\]
\end{lemma}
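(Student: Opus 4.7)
The plan is to prove this commutation relation by lifting the $\E$-operators to the fermionic presentation of the infinite wedge space. In this presentation, one has an expansion of the form
\[
\E_k(z) = \sum_{j \in \mathbb{Z} + \frac{1}{2}} e^{z(j - k/2)} :\!E_{j-k,j}\!: \;+\; \frac{\delta_{k,0}}{e^{z/2} - e^{-z/2}},
\]
where $E_{i,j}$ denotes the elementary matrix unit on the lattice $\mathbb{Z} + \frac{1}{2}$, acting on a Maya diagram by moving a fermion from site $j$ to site $i$ (with the Koszul sign from anticommuting past intervening occupied sites), and the colons indicate normal ordering, which only matters when $k=0$. To identify this with \cref{def:Eoperators}, I would match the action on an arbitrary basis vector $v_\lambda$ via the standard bijection between partitions and Maya diagrams: each $m$-ribbon added to or removed from $\lambda$ corresponds to exactly one fermion hop by $m$ lattice units, its centre in the sense of the paper equals the midpoint $j - k/2$ of source and target, and its sign coincides with the fermionic sign that arises from passing through intervening occupied sites. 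The diagonal term matches the Frobenius-coordinate eigenvalue formula after a short calculation involving the Dirac sea.

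With this expansion in hand, the commutator reduces to the canonical relation $E_{i,j} E_{k,l} = \delta_{jk} E_{i,l}$. I would multiply out $\E_{a_1}(b_1\h) \E_{a_2}(b_2\h)$ to obtain a single sum over the lattice of matrix units $E_{j-a_1-a_2, j}$ with coefficient $e^{b_1\h(j - a_2 - a_1/2) + b_2\h(j - a_2/2)}$. Performing the same for the reversed product $\E_{a_2}(b_2\h) \E_{a_1}(b_1\h)$ and subtracting, the difference of coefficients factors as
\[
e^{(b_1+b_2)\h(j - (a_1+a_2)/2)} \bigl( e^{\h(a_1 b_2 - a_2 b_1)/2} - e^{-\h(a_1 b_2 - a_2 b_1)/2} \bigr),
\]
where the second factor is exactly $[a_1 b_2 - a_2 b_1]$ and the remaining sum over $j$ reassembles into $\E_{a_1+a_2}((b_1+b_2)\h)$. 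This yields the identity whenever $a_1 + a_2 \neq 0$.

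The main obstacle is the diagonal case $a_1 + a_2 = 0$, where the formal sum defining $\E_0$ is only conditionally convergent and requires normal ordering. The manipulation above is then illegal as stated, because reordering infinitely many fermionic bilinears produces an anomalous c-number contribution. The technical point is to verify that this anomaly precisely reproduces $[a_1 b_2 - a_2 b_1]$ times the zero-mode constant $1/(e^{(b_1+b_2)\h/2} - e^{-(b_1+b_2)\h/2})$ appearing in $\E_0((b_1+b_2)\h)$. Concretely, the anomaly arises as a telescoping sum over the Dirac sea that regularises to exactly this constant, after which the full commutation relation follows in all cases.
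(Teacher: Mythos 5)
The paper does not prove this lemma at all: it is quoted verbatim from Okounkov and Pandharipande, and \cref{sec:wedge} explicitly presents it ``devoid of any proofs''. So there is no internal argument to compare against; your proposal is, in outline, exactly the standard proof from the cited source, and it is sound. You correctly lift $\E_k(z)$ to the fermionic expansion $\sum_{j} e^{z(j-k/2)}\,E_{j-k,j}$ plus the $\delta_{k,0}$ constant, match the ribbon combinatorics of \cref{def:Eoperators} with single fermion hops on the Maya diagram (centre $=$ midpoint of source and target, sign $=$ Koszul sign, and the Frobenius formula for the diagonal part), and then reduce the commutation relation to the $\widehat{\mathfrak{gl}}(\infty)$ bracket with central extension. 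Two points deserve more care than your write-up gives them. First, the product $\E_{a_1}(b_1\h)\,\E_{a_2}(b_2\h)$ is a \emph{double} sum over the lattice and is not itself a single sum of matrix units; the single sum you describe only appears in the commutator, via $[E_{ij},E_{kl}]=\delta_{jk}E_{il}-\delta_{il}E_{kj}$ (plus the central term), after the uncontracted terms cancel between the two orderings --- your displayed factorisation of the difference of coefficients is then correct and recovers $[a_1b_2-a_2b_1]\,\E_{a_1+a_2}((b_1+b_2)\h)$. Second, the anomaly in the case $a_1+a_2=0$ is not a conditionally convergent or telescoping sum needing regularisation: the c-number in $[E_{ij},E_{ji}]$ is supported on exactly the $|a_1|$ half-integers $j$ for which $j$ and $j-a_1$ lie on opposite sides of $0$, so it is the finite geometric sum $\sum e^{(b_1+b_2)\h(j-a_1/2)}$, which evaluates to $[a_1(b_1+b_2)]/[b_1+b_2]=[a_1b_2-a_2b_1]/[b_1+b_2]$ and thus matches the constant term of $[a_1b_2-a_2b_1]\,\E_0((b_1+b_2)\h)$, exactly as you claim. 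With those two steps stated precisely, the proof is complete and agrees with the argument in the reference the paper cites.
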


We often use the commutation relation of \cref{lem:Ecommutation} in the form
\[
\E_{a_1}(b_1\h) \, \E_{a_2}(b_2\h) = \E_{a_2}(b_2\h) \, \E_{a_1}(b_1\h) + [a_1b_2-a_2b_1] \, \E_{a_1+a_2}((b_1+b_2)\h).
\]
In this case, we refer to the first term of the right side as the {\em passing term} and the second as the {\em commutation term}.

It would be remiss not to say a few words about the proof of \cref{prop:hurwitzwedge} at this stage. We know from \cref{prop:hurwitz} that disconnected Hurwitz numbers enumerate factorisations of permutations into transpositions. This can be expressed in terms of irreducible characters of symmetric groups via the Burnside--Frobenius formula~\cite{cav-mil16, lan-zvo04}.\footnote{Some sources in the literature attribute the result to Burnside and others to Frobenius.} Irreducible characters of symmetric groups can be naturally expressed through the calculus of the bosonic operators $\alpha_m = \E_m(0)$ on the infinite wedge space. Such operators can be interpreted as adding and removing ribbons from Young diagrams in a way that mimics the Murnaghan--Nakayama rule for symmetric group characters. The fact that we seek to express permutations as products of transpositions leads to a deformation of the bosonic operators to $\E$-operators, resulting in the statement of \cref{prop:hurwitzwedge} for the disconnected Hurwitz number generating function. The interested reader can find the details in the literature~\cite{cav-mil16, joh15}.

Thus, the path from disconnected Hurwitz numbers to the infinite wedge space is relatively straightforward and perhaps the only surprise is that \cref{prop:hurwitzwedge} is so compact and amenable to analysis via the commutation relation of \cref{lem:Ecommutation}.

\section{The structure of Hurwitz numbers} \label{sec:proof}

In this section, we prove \cref{thm:main}, which states that Hurwitz numbers with fixed ramification profile can be expressed as a linear combination of exponentials. We introduce the notion of connected correlators of $\E$-operators and show that they can be used to calculate generating functions for connected Hurwitz numbers. The proof then proceeds with a thorough analysis of an algorithm that we present to compute such correlators.

\subsection{Combinatorics of correlators} \label{subsec:correlators}

\cref{prop:hurwitzwedge} expresses Hurwitz numbers via the infinite wedge space and motivates us to calculate correlators of the form
\[
\langle \, \E_{a_1}(b_1\h) \, \E_{a_2}(b_2\h) \cdots \E_{a_m}(b_m\h) \, \rangle,
\]
for integers $a_1, a_2, \ldots, a_m, b_1, b_2, \ldots, b_m$ and a formal variable $\h$. This can be achieved by repeatedly applying the commutation relation of \cref{lem:Ecommutation} and using \cref{eq:Eevaluation} to evaluate the correlator of a single $\E$-operator. We formalise this via the following algorithm, which appeared previously in the context of double Hurwitz numbers~\cite{joh15}.

\begin{algorithm}[Disconnected Hurwitz numbers] \label{alg:disconnected}
Let $a_1, a_2, \ldots, a_m$ be integers and $b_1, b_2, \ldots, b_m$ be non-negative integers such that if $a_i \geq 0$, then $b_i > 0$.\footnote{The integers $a_1, a_2, \ldots, a_m$ and $b_1, b_2, \ldots, b_m$ are required to satisfy some conditions to avoid the appearance of the undefined operator $\E_0(0)$. These conditions also allow us to avoid the case of commuting two $\E$-operators with zero argument, in which case one would need to use the commutation relation $[\E_m(0), \E_n(0)] = \lim_{\h \to 0} [\E_m(0), \E_n(\h)] = \delta_{m+n,0} \, m$.} Correlators of the form $\langle \, \E_{a_1}(b_1\h) \, \E_{a_2}(b_2\h) \cdots \E_{a_m}(b_m\h) \, \rangle$ can be evaluated by the following algorithm.
\begin{enumerate}[label={(\arabic*)}]
\item If $a_1 + a_2 + \cdots + a_m \neq 0$, return 0. \\
(This follows from the homogeneity properties of the $\E$-operators.)
\item If $a_1 + a_2 + \cdots + a_i < 0$ for some $1 \leq i < m$, return 0. \\
(This also follows from the homogeneity properties of the $\E$-operators and the fact that the infinite wedge space contains no vectors of negative degree.)
\item If $a_1 = a_2 = \cdots = a_m = 0$, return
\[
\frac{1}{[b_1] \, [b_2] \cdots [b_n]}.
\]
(Recall from \cref{def:Eoperators} that $E_0(b\h)$ is diagonal and from \cref{eq:Eevaluation} its eigenvalue corresponding to $v_\emptyset$.)
\item Otherwise, let $\E_{a_i}(b_i\h)$ be the rightmost $\E$-operator with positive subscript and return
\begin{align*}
& \langle \, \E_{a_1}(b_1\h) \cdots \E_{a_{i+1}}(b_{i+1}\h) \, \E_{a_i}(b_i\h) \cdots \E_{a_m}(b_m\h) \, \rangle \\
& \qquad + [a_ib_{i+1}-a_{i+1}b_i] \, \langle \, \E_{a_1}(b_1\h) \cdots \E_{a_i+a_{i+1}}((b_i+b_{i+1})\h) \cdots \E_{a_m}(b_m\h) \, \rangle.
\end{align*}
(Recall the commutation relation of \cref{lem:Ecommutation}.)
\end{enumerate}
\end{algorithm}

\cref{alg:disconnected} does indeed terminate, since the first term of the equation in Step~(4) (the {\em passing term}) moves the rightmost $\E$-operator with positive subscript further to the right, while the second term (the {\em commutation term}) has one less $\E$-operator. In both cases, the complexity of the correlator is reduced.

\begin{example}
We apply \cref{alg:disconnected} to calculate $\langle \, \E_{a_1}(b_1\h) \, \E_{a_2}(b_2\h) \, \E_{-a_1-a_2}(b_3\h) \, \rangle$ as follows, where we assume that $a_1, a_2, b_1, b_2, b_3$ are positive integers. Any term that is zero due to Step~(2) in the algorithm has been crossed out.
\begin{align*}
& \langle \, \E_{a_1}(b_1\h) \, \E_{a_2}(b_2\h) \, \E_{-a_1-a_2}(b_3\h) \, \rangle \\
={}& \hcancel{\langle \, \E_{a_1}(b_1\h) \, \E_{-a_1-a_2}(b_3\h) \, \E_{a_2}(b_2\h) \, \rangle} + [a_2b_3+a_1b_2+a_2b_2] \, \langle \, \E_{a_1}(b_1\h) \, \E_{-a_1}((b_2+b_3)\h) \, \rangle \\
={}& [a_2b_3+a_1b_2+a_2b_2] \Big( \hcancel{\langle \, \E_{-a_1}((b_2+b_3)\h) \, \E_{a_1}(b_1\h) \, \rangle} + [a_1b_2 + a_1b_3 + a_1b_1] \, \langle \, \E_{0}((b_1+b_2+b_3)\h) \, \rangle \Big) \\
={}& \frac{[a_1b_2 + a_2b_2 + a_2b_3] \, [a_1b_1 + a_1b_2 + a_1b_3]}{[b_1 + b_2 + b_3]}
\end{align*}

It is worth remarking that the correlator $\langle \, \E_{a_1}(b_1\h) \, \E_{a_2}(b_2\h) \, \E_{a_3}(b_3\h) \, \rangle$ could have been computed differently, such as by applying the commutation relation to the left two $\E$-operators before proceeding with \cref{alg:disconnected}. This would produce the expression
\[
\frac{[a_1b_1 + a_1b_3 + a_2b_1] \, [a_2b_1 + a_2b_2 + a_2b_3] + [a_1b_2 - a_2b_1] \, [(a_1+a_2)(b_1+b_2+b_3)]}{[b_1 + b_2 + b_3]},
\]
which is necessarily equal to the one derived above. Indeed, the equality of these two expressions follows from the $q$-integer relation
\[
[a] \, [b-c] + [b] \, [c-a] + [c] \, [a-b] = 0,
\]
which holds for all integers $a, b, c$. More generally, expressions involving $q$-integers are not unique and are subject to the relation above.
\end{example}

\cref{prop:hurwitzwedge} involves disconnected Hurwitz numbers, from which their connected counterparts can be derived via the inclusion--exclusion formula. An arguably more elegant approach is to make use of connected correlators for $\E$-operators, which were used in the context of double Hurwitz numbers with completed cycles by Shadrin, Spitz and Zvonkine~\cite[Section~4.4]{sha-spi-zvo12}. We explain the concept here using the setting and notation of the present work.

\begin{definition} \label{def:concorrelator}
\cref{alg:disconnected} ultimately expresses each correlator $\langle \, \E_{a_1}(b_1\h) \, \E_{a_2}(b_2\h) \cdots \E_{a_m}(b_m\h) \, \rangle$ as a sum of terms of the form $[k_1] \, [k_2] \cdots [k_\ell] \, \langle \, \E_0(c_1\h) \, \E_0(c_2\h) \cdots \E_0(c_r\h) \, \rangle$. Define the {\em connected correlator} $\langle \, \E_{a_1}(b_1\h) \, \E_{a_2}(b_2\h) \cdots \E_{a_m}(b_m\h) \, \rangle^\circ$ to be the result of applying \cref{alg:disconnected} but only retaining terms of the form above with $r = 1$.
\end{definition}

The connected correlator is named so due to the following connected analogue of \cref{prop:hurwitzwedge}.

\begin{proposition} \label{prop:conhurwitzwedge}
For fixed $\mu = (\mu_1, \ldots, \mu_n)$, the exponential generating function for connected Hurwitz numbers can be expressed as the connected correlator
\[
F_\mu(\h) = \sum_{g = 0}^\infty H_{g;\mu} \, \frac{\h^{|\mu|+2g-2+n}}{(|\mu|+2g-2+n)!} = \frac{1}{|\mu|! \, \mu_1 \cdots \mu_n} \, \langle \, \E_1(\h)^{|\mu|} \, \E_{-\mu_1}(0) \cdots \E_{-\mu_n}(0) \, \rangle^\circ.
\]
\end{proposition}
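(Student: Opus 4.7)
The plan is to pass from the disconnected formula of \cref{prop:hurwitzwedge} to the connected statement by matching set-partition decompositions on both sides. On the Hurwitz side, a disconnected cover is a disjoint union of connected covers, inducing a set partition $\pi = \{B_1,\ldots,B_k\}$ of $\{1,\ldots,n\}$ according to which component each labelled preimage of $\infty$ lies on. The $|\mu|+2g-2+n$ prescribed simple branch points must be distributed among the components, contributing a multinomial factor that cancels exactly against the factorials in the exponential generating function. This yields
\[
F^\bullet_\mu(\h) \;=\; \sum_\pi \, \prod_{B \in \pi} F_{\mu_B}(\h), \qquad \text{where } \mu_B := (\mu_i)_{i \in B},
\]
and the sum ranges over set partitions $\pi$ of $\{1,\ldots,n\}$.

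On the correlator side, I establish the analogous decomposition. Each term produced by \cref{alg:disconnected} can be labelled by a merging forest on the input $\E$-operators: leaves are the original operators, internal nodes are the commutation (merging) steps, and each tree of the forest collapses into one of the final $\E_0$ factors. For the resulting product of $\E_0$'s to evaluate nontrivially via \cref{eq:Eevaluation}, each tree's leaves must have subscripts summing to zero, so a tree containing the $\E_{-\mu_i}(0)$ operators indexed by $I \subseteq \{1,\ldots,n\}$ contains exactly $|\mu_I|$ of the $\E_1(\h)$ operators. Grouping terms by the induced set partition $\pi$ of $\{1,\ldots,n\}$ and by the assignment of $\E_1(\h)$ positions to the blocks gives
\[
\langle\,\E_1(\h)^{|\mu|}\,\E_{-\mu_1}(0)\cdots\E_{-\mu_n}(0)\,\rangle \;=\; \sum_\pi \binom{|\mu|}{|\mu_{B_1}|,\ldots,|\mu_{B_k}|} \prod_{B \in \pi} \Big\langle\,\E_1(\h)^{|\mu_B|}\textstyle\prod_{i \in B}\E_{-\mu_i}(0)\,\Big\rangle^\circ,
\]
where the multinomial counts the distributions of the $|\mu|$ indistinguishable $\E_1(\h)$ positions among the blocks, and where all such distributions yield the same within-block connected correlator because the $\E_1(\h)$ operators are indistinguishable.

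To conclude, I divide the correlator identity by $|\mu|!\,\mu_1\cdots\mu_n$ and apply the identity $\frac{1}{|\mu|!}\binom{|\mu|}{|\mu_{B_1}|,\ldots,|\mu_{B_k}|} = \prod_B \frac{1}{|\mu_B|!}$ to rewrite the right-hand side as a sum over $\pi$ of products paralleling the Hurwitz decomposition. Matching the two decompositions term by term and inducting on $n$ (with trivial base case $n=1$: the only set partition is the whole set, and moreover every term in the algorithm either vanishes by Step~(2) or already terminates in a single $\E_0$, so $\langle \cdot \rangle = \langle \cdot \rangle^\circ$ and the claim reduces to \cref{prop:hurwitzwedge}) gives the proposition.

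The hardest step is the correlator decomposition: justifying the set-partition factorisation with the correct multinomial coefficient. The intuition is that once we commit to a partition, cross-block commutation steps of \cref{alg:disconnected} must be passing terms (no merging is allowed across blocks), so they factor out trivially, while the within-block rightmost-positive choice coincides with that of the algorithm applied to the block in isolation, reproducing the connected correlator. Making this combinatorially rigorous will likely require an induction on the number of $\E$-operators, or an adaptation of the cluster-expansion formalism for connected correlators developed by Shadrin--Spitz--Zvonkine in \cite{sha-spi-zvo12}.
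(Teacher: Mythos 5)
Your proposal is correct and takes essentially the same route as the paper's own proof: you match the geometric set-partition decomposition of $F^\bullet_\mu$ against the correlator factorisation $\langle\,\E_1(\h)^{|\mu|}\,\E_{-\mu_1}(0)\cdots\E_{-\mu_n}(0)\,\rangle = \sum_\pi \frac{|\mu|!}{\prod_{B\in\pi}|\mu_B|!}\prod_{B\in\pi}\big\langle\,\E_1(\h)^{|\mu_B|}\prod_{i\in B}\E_{-\mu_i}(0)\,\big\rangle^\circ$, inducting on $n$ with the same $n=1$ base case (full cyclic profile forces connectedness, and degree considerations force a single $\E_0$ factor). Your merging-forest justification of the correlator factorisation is in fact slightly more explicit than the paper's brief argument, which likewise notes that commutation terms are never taken between operators feeding different $\E_0$ factors and that the multinomial counts the assignment of the $\E_1(\h)$ operators to blocks.
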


\begin{proof}
The following relation expresses the fact that a possibly disconnected branched cover is simply a disjoint union of connected branched covers.
\begin{equation} \label{eq:Finclusionexclusion}
F_\mu^\bullet(\h) = \sum_{S_1 \sqcup \cdots \sqcup S_m = \{1, 2, \ldots, n\}} \prod_{i=1}^m F_{\mu_{S_i}}(\h)
\end{equation}
The summation is over all the ways to partition the indices $\{1, 2, \ldots, n\}$ into disjoint subsets and for a subset $S = \{s_1, s_2, \ldots, s_k\}$, we define $\mu_S = (\mu_{s_1}, \mu_{s_2}, \ldots, \mu_{s_k})$.

For $n = 1$ in which case $\mu = (\mu_1)$, we have
\[
F_{\mu_1}(\h) = F_{\mu_1}^\bullet(\h) = \frac{1}{\mu_1! \, \mu_1} \, \langle \, \E_1(\h)^{\mu_1} \, \E_{-\mu_1}(0) \, \rangle = \frac{1}{\mu_1! \, \mu_1} \, \langle \, \E_1(\h)^{\mu_1} \, \E_{-\mu_1}(0) \, \rangle^\circ.
\]
The first equality holds since branched covers with a full cyclic ramification profile are necessarily connected; the second equality follows from \cref{prop:hurwitzwedge}; and the third equality follows from degree considerations in the process of applying \cref{alg:disconnected}.

By induction on $n$ and invoking the inclusion--exclusion principle, we obtain the desired result from the following relation between disconnected and connected correlators, which mimics \cref{eq:Finclusionexclusion}.
\begin{equation} \label{eq:Einclusionexclusion}
\langle \, \E_1(\h)^{|\mu|} \, \E_{-\mu_1}(0) \cdots \E_{-\mu_n}(0) \, \rangle = \sum_{S_1 \sqcup \cdots \sqcup S_m = \{1, 2, \ldots, n\}} \frac{|\mu|!}{|\mu_{S_1}|! \cdots |\mu_{S_m}|!} \, \prod_{i=1}^m \Big\langle \, \E_1(\h)^{|\mu_{S_i}|} \, \prod_{k \in \mu_{S_i}} \E_{-k}(0) \, \Big\rangle^\circ
\end{equation}

\cref{alg:disconnected} expresses the correlator on the left as a sum of terms of the form
\[
\langle \, \E_0(b_1 \h) \, \E_0(b_2 \h) \cdots \E_0(b_m \h) \, \rangle = \langle \, \E_0(b_1 \h) \, \rangle \, \langle \, \E_0(b_2 \h) \, \rangle \cdots \langle \, \E_0(b_m \h) \, \rangle.
\]
During this process, each of the $\E$-operators that we start with ultimately contributes to precisely one of the $\langle \, \E_0(b_i \h) \, \rangle$ factors. In particular, a commutation term is never taken between operators that contribute to different factors. It follows that each $\langle \, \E_0(b_i \h) \, \rangle$ is itself a connected correlator for the product of $\E$-operators that contribute to it. \Cref{eq:Einclusionexclusion} is simply a reflection of this fact. Finally, the combinatorial factor arises from a choice of which $\E_1(\h)$ operators in the disconnected correlator on the left contribute to which connected correlator on the right.
\end{proof}

It follows from \cref{prop:conhurwitzwedge} that Steps~(2) and~(3) of \cref{alg:disconnected} can be adapted to give the following algorithm for connected Hurwitz numbers.

\begin{algorithm}[Connected Hurwitz numbers] \label{alg:connected}
Let $a_1, a_2, \ldots, a_m$ be integers and $b_1, b_2, \ldots, b_m$ be non-negative integers such that if $a_i \geq 0$, then $b_i > 0$. Connected correlators of the form $\langle \, \E_{a_1}(b_1\h) \, \E_{a_2}(b_2\h) \cdots \E_{a_m}(b_m\h) \, \rangle^\circ$ can be evaluated by the following algorithm.
\begin{enumerate}[label={(\arabic*)}]
\item If $a_1 + a_2 + \cdots + a_m \neq 0$, return 0.
\item If $a_1 + a_2 + \cdots + a_i \leq 0$ for some $1 \leq i < m$, return 0. \\
(This follows from the fact that if $a_1 + a_2 + \cdots + a_i = 0$ for $1 \leq i < m$, then we have
\[
\langle \, \E_{a_1}(b_1\h) \, \E_{a_2}(b_2\h) \cdots \E_{a_m}(b_m\h) \, \rangle = \langle \, \E_{a_1}(b_1\h) \cdots \E_{a_i}(b_i\h) \, \rangle \, \langle \, \E_{a_{i+1}}(b_{i+1}\h) \cdots \E_{a_m}(b_m\h) \, \rangle,
\]
which does not contribute to the connected correlator.)
\item If $a_1 = a_2 = \cdots = a_m = 0$, return
\[
\delta_{m,1} \frac{1}{[b_1]}.
\]
\item Otherwise, let $\E_{a_i}(b_i\h)$ be the rightmost $\E$-operator with positive subscript and return
\begin{align*}
& \langle \, \E_{a_1}(b_1\h) \cdots \E_{a_{i+1}}(b_{i+1}\h) \, \E_{a_i}(b_i\h) \cdots \E_{a_m}(b_m\h) \, \rangle^\circ \\
& \qquad + [a_ib_{i+1}-a_{i+1}b_i] \, \langle \, \E_{a_1}(b_1\h) \cdots \E_{a_i+a_{i+1}}((b_i+b_{i+1})\h) \cdots \E_{a_m}(b_m\h) \, \rangle^\circ.
\end{align*}
\end{enumerate}
\end{algorithm}

\begin{example} \label{ex:concorrelator}
We apply \cref{alg:connected} to calculate the generating function $F_{211}(\h)$. Any term that is zero due to Step~(2) in the algorithm has been crossed out.
\begin{align*}
F_{211}(\h) ={}& \langle \, \E_1(\h) \, \E_1(\h) \, \E_1(\h) \, \E_1(\h) \, \E_{-2}(0) \, \E_{-1}(0) \, \E_{-1}(0) \, \rangle \\
={}& \langle \, \E_1(\h) \, \E_1(\h) \, \E_1(\h) \, \E_{-2}(0) \, \E_1(\h) \, \E_{-1}(0) \, \E_{-1}(0) \, \rangle + [2] \, \langle \, \E_1(\h) \, \E_1(\h) \, \E_1(\h) \, \E_{-1}(\h) \, \E_{-1}(0) \, \E_{-1}(0) \, \rangle \\
={}& \hcancel{\langle \, \E_1(\h) \, \E_1(\h) \, \E_1(\h) \, \E_{-2}(0) \, \E_{-1}(0) \, \E_1(\h) \, \E_{-1}(0) \, \rangle} + [1] \, \langle \, \E_1(\h) \, \E_1(\h) \, \E_1(\h) \, \E_{-2}(0) \, \E_0(\h) \, \E_{-1}(0) \, \rangle \\
& + [2] \, \langle \, \E_1(\h) \, \E_1(\h) \, \E_{-1}(\h) \, \E_1(\h) \, \E_{-1}(0) \, \E_{-1}(0) \, \rangle + [2]^2 \, \langle \, \E_1(\h) \, \E_1(\h) \, \E_0(2\h) \, \E_{-1}(0) \, \E_{-1}(0) \, \rangle \\
={}& \hcancel{[1] \, \langle \, \E_1(\h) \, \E_1(\h) \, \E_{-2}(0) \, \E_1(\h) \, \E_0(\h) \, \E_{-1}(0) \, \rangle} + [2] \, [1] \, \langle \, \E_1(\h) \, \E_1(\h) \, \E_{-1}(\h) \, \E_0(\h) \, \E_{-1}(0) \, \rangle \\
& + \hcancel{[2] \, \langle \, \E_1(\h) \, \E_1(\h) \, \E_{-1}(\h) \, \E_{-1}(0) \, \E_1(\h) \, \E_{-1}(0) \, \rangle} + [2] \, [1] \, \langle \, \E_1(\h) \, \E_1(\h) \, \E_{-1}(\h) \, \E_0(\h) \, \E_{-1}(0) \, \rangle \\
& + [2]^2 \, \langle \, \E_1(\h) \, \E_0(2\h) \, \E_1(\h) \, \E_{-1}(0) \, \E_{-1}(0) \, \rangle + [2]^3 \, \langle \, \E_1(\h) \, \E_1(3\h) \, \E_{-1}(0) \, \E_{-1}(0) \, \rangle \displaybreak[0] \\
={}& \hcancel{2 \, [2] \, [1] \, \langle \, \E_1(\h) \, \E_{-1}(\h) \, \E_1(\h) \, \E_0(\h) \, \E_{-1}(0) \, \rangle} + 2 \, [2]^2 \, [1] \, \langle \, \E_1(\h) \, \E_0(2\h) \, \E_0(\h) \, \E_{-1}(0) \, \rangle \\
& + \hcancel{[2]^2 \, \langle \, \E_1(\h) \, \E_0(2\h) \, \E_{-1}(0) \, \E_1(\h) \, \, \E_{-1}(0) \, \rangle} + [2]^2 \, [1]\, \langle \, \E_1(\h) \, \E_0(2\h) \, \E_0(\h) \, \E_{-1}(0) \, \rangle \\
& + \hcancel{[2]^3 \, \langle \, \E_1(\h) \, \E_{-1}(0) \, \E_1(3\h) \, \E_{-1}(0) \, \rangle} + [3] \, [2]^3 \, \langle \, \E_1(\h) \, \E_0(3\h) \, \E_{-1}(0) \, \rangle \\
={}& \hcancel{3 \, [2]^2 \, [1] \, \langle \, \E_0(2\h) \, \E_1(\h) \, \E_0(\h) \, \E_{-1}(0) \, \rangle} + 3 \, [2]^3 \, [1] \, \langle \, \E_1(3\h) \, \E_0(\h) \, \E_{-1}(0) \, \rangle \\
& + \hcancel{[3] \, [2]^3 \, \langle \, \E_0(3\h) \, \E_1(\h) \, \E_{-1}(0) \, \rangle} + [3]^2 \, [2]^3 \, \langle \, \E_1(4\h) \, \E_{-1}(0) \, \rangle \\
={}& \hcancel{3 \, [2]^3 \, [1] \, \langle \, \E_0(\h) \, \E_1(3\h) \, \E_{-1}(0) \, \rangle} + 3 \, [2]^3 \, [1]^2 \, \langle \, \E_1(4\h) \, \E_{-1}(0) \, \rangle \\
& + \hcancel{[3]^2 \, [2]^3 \, \langle \, \E_{-1}(0) \, \E_1(4\h) \, \rangle} + [4] \, [3]^2 \, [2]^3 \, \langle \, \E_0(4\h) \, \rangle \\
={}& \hcancel{3 \, [2]^3 \, [1]^2 \, \langle \, \E_{-1}(0) \, \E_1(4\h) \, \rangle} + 3 \, [4] \, [2]^3 \, [1]^2 \, \langle \, \E_0(4\h) \, \rangle + [3]^2 \, [2]^3 \\
={}& 3 \, [2]^3 \, [1]^2 + [3]^2 \, [2]^3 
\end{align*}

It is a simple matter to extract the coefficient from $F_{211}(\h)$ corresponding to the Hurwitz number $H_{g;211}$.
\begin{align*}
H_{g;211} &= \frac{(2g+5)!}{4! \times 2 \times 1 \times 1} \, [\h^{2g+5}] \big( 3 \, [2]^3 \, [1]^2 + [3]^2 \, [2]^3 \big) \\
&= \frac{(2g+5)!}{48} \, [\h^{2g+5}] \Big( 3 \, (e^{\h} - e^{-\h})^3 \, (e^{\h/2} - e^{-\h/2})^2 + (e^{3\h/2} - e^{-3\h/2})^2 \, (e^\h - e^{-\h})^3 \Big) \\
&= \frac{(2g+5)!}{48} \, [\h^{2g+5}] \Big( (e^{6\h} - e^{-6\h}) - 8(e^{3\h} -e^{-3\h}) - 3(e^{2\h} - e^{-2\h}) + 24(e^{\h} - e^{-\h}) \Big) \\
&= \frac{(2g+5)!}{48} \, [\h^{2g+5}] \sum_{k=0}^\infty \frac{2}{(2k+1)!} \Big( (6\h)^{2k+1} - 8 \cdot (3\h)^{2k+1} - 3 \cdot (2\h)^{2k+1} + 24 \cdot (1\h)^{2k+1} \Big) \\
&= \frac{1}{24} \, (6^{2g+5} - 8 \cdot 3^{2g+5} - 3 \cdot 2^{2g+5} + 24 \cdot 1^{2g+5})
\end{align*}
\end{example}

\cref{alg:disconnected,alg:connected} are rather explicit and can be easily implemented in a computer algebra system. The data in \cref{tab:correlators} was produced using SageMath, with higher degree correlators omitted only for brevity~\cite{sagemath}.

\begin{table}[ht!]
\centering
\begin{tabularx}{\textwidth}{cX} \toprule
$\mu$ & $\langle \, \E_1(\h)^{|\mu|} \, \E_{-\mu_1}(0) \, \E_{-\mu_2}(0) \cdots \E_{-\mu_n}(0) \, \rangle^\circ$ \\ \midrule
$2$ & $[2]$ \\
$11$ & $[1]^2$ \\ \midrule
$3$ & $[3]^2$ \\
$21$ & $[2]^3$ \\
$111$ & $[2]^2 \, [1]^2 + 2 \, [1]^4$ \\ \midrule
$4$ & $[4]^3$ \\
$31$ & $[3]^4$ \\
$22$ & $[6] \, [2]^3 + 3 \, [2]^4$ \\
$211$ & $[3]^2 \, [2]^3 + 3 \, [2]^3 \, [1]^2$ \\
$1111$ & $[3]^2 \, [2]^2 \, [1]^2 + 2 \, [3]^2 \, [1]^4 + 9 \, [2]^2 \, [1]^4 + 6 \, [1]^6$ \\ \midrule
$5$ & $[5]^4$ \\
$41$ & $[4]^5$ \\
$32$ & $[8] \, [3]^4 + 4 \, [3]^4 \, [2]$ \\
$311$ & $[4]^2 \, [3]^4 + 4 \, [3]^4 \, [1]^2$ \\
$221$ & $[6] \, [4]^2 \, [2]^3 + 3 \, [4]^2 \, [2]^4 + 6 \, [2]^6$ \\
$2111$ & $[4]^2 \, [3]^2 \, [2]^3 + 3 \, [4]^2 \, [2]^3 \, [1]^2 + 8 \, [3]^2 \, [2]^3 \, [1]^2 + 6 \, [2]^5 \, [1]^2 + 12 \, [2]^3 \, [1]^4$ \\
$11111$ & $[4]^2 \, [3]^2 \, [2]^2 \, [1]^2 + 2 \, [4]^2 \, [3]^2 \, [1]^4 + 9 \, [4]^2 \, [2]^2 \, [1]^4 + 6 \, [4]^2 \, [1]^6 + 16 \, [3]^2 \, [2]^2 \, [1]^4 + 32 \, [3]^2 \, [1]^6 $ \\
 & $+ 18 \, [2]^4 \, [1]^4 + 72 \, [2]^2 \, [1]^6 + 24 \, [1]^8$ \\ \bottomrule
\end{tabularx}
\caption{The connected correlator $\langle \, \E_1(\h)^{|\mu|} \, \E_{-\mu_1}(0) \, \E_{-\mu_2}(0) \cdots \E_{-\mu_n}(0) \, \rangle^\circ$ for each $2 \leq |\mu| \leq 5$, expressed as a polynomial in the $q$-integers according to \cref{alg:connected}.}
\label{tab:correlators}
\end{table}

\subsection{Proof of the main theorem}

The calculations of \cref{ex:concorrelator} already demonstrate the broad features of the proof of~\cref{thm:main}. However, we require certain details regarding the combinatorics of connected correlators, which we present as the following two lemmas.

\begin{lemma} \label{lem:terms}
Let $a_1, a_2, \ldots, a_m$ be integers that sum to $0$ and let $b_1, b_2, \ldots, b_m$ be non-negative integers that sum to~$b$ such that if $a_i \geq 0$, then $b_i > 0$. For $m \geq 2$, the connected correlator $\langle \, \E_{a_1}(b_1\h) \, \E_{a_2}(b_2\h) \cdots \E_{a_m}(b_m\h) \, \rangle^\circ$ can be expressed as a sum of finitely many terms of the form
\[
[k_1] \, [k_2] \cdots [k_{m-2}] \, \frac{[ab]}{[b]},
\]
for positive integers $k_1, k_2, \ldots, k_{m-2}$ and a positive integer $a \leq \max\{a_1, a_2, \ldots, a_m\}$.
\end{lemma}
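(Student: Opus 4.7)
The plan is to prove the claim by primary induction on $m$, following Algorithm 3.3 directly, with a secondary induction for fixed $m$ on the position of the rightmost $\E$-operator with positive subscript. Throughout the recursion I will maintain two invariants: the total argument $b_1 + \cdots + b_m$ is preserved and equals $b$ under both passing and commutation; and every positive subscript appearing in every intermediate correlator is bounded above by $M := \max\{a_1, \ldots, a_m\}$. The second invariant holds because whenever Step~(4) fires, the operator to the right of the chosen one has non-positive subscript (by the definition of rightmost positive), so the new subscript $a_i + a_{i+1}$ produced by the commutation term satisfies $a_i + a_{i+1} \leq a_i \leq M$, while passing leaves all subscripts fixed.

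For the base case $m = 2$, the hypotheses force $a_1 = a > 0$ and $a_2 = -a$. Applying Step~(4), the passing term vanishes by Step~(2) since its length-one prefix sum equals $-a < 0$, while the commutation term evaluates to $[ab_2 + ab_1]\,\langle\,\E_0(b\h)\,\rangle^\circ = [ab]/[b]$, which matches the claimed form with an empty product of $[k_i]$-factors and $a \leq M$.

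For the inductive step $m \geq 3$, apply Step~(4) to the rightmost positive-subscript operator $\E_{a_i}$, producing a passing term and a commutation term. The commutation term multiplies $[a_i b_{i+1} - a_{i+1} b_i]$ against a length-$(m-1)$ correlator whose total argument is still $b$ and which still satisfies the lemma's hypotheses (the implication $a_j \geq 0 \Rightarrow b_j > 0$ survives since $b_i > 0$ whenever $a_i > 0$). By the primary inductive hypothesis, this shorter correlator is a sum of terms $[k_1']\,[k_2']\cdots[k_{m-3}']\cdot[a'b]/[b]$ with $a' \leq M$, and multiplying by the commutation factor supplies the $(m-2)$-th $[k]$-factor. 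The passing term has the same $m$ and the same multiset of pairs $(a_j, b_j)$, but the rightmost positive has moved one step to the right; by secondary induction on $m$ minus this position --- a quantity that strictly decreases under each passing, is bounded below by zero, and is forcibly zero exactly when Step~(2) catches the correlator (which happens automatically once the rightmost positive reaches position $m$, since then the prefix sum at position $m-1$ equals $-a_m < 0$) --- the passing term is a finite sum of zero contributions and further commutations, the latter again handled by the paragraph above.

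The main obstacle I anticipate is the bookkeeping for the max-subscript bound: one must verify that it persists through every commutation in every branch of the sprawling recursion tree, not merely through the first one. I handle this uniformly via the invariant described in the first paragraph. A subsidiary point is confirming that the denominator always reduces to $[b]$ rather than some intermediate partial sum; this follows from the fact that both passing and commutation preserve $\sum b_j$, so in every surviving branch the algorithm terminates with a single $\E_0$-operator whose argument must equal $b\h$.
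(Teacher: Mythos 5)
Your argument is correct and takes essentially the same route as the paper's proof: both track \cref{alg:connected} through Step~(4) using the same invariants (the total argument $b$ is preserved, the maximal subscript never increases, and each commutation adds exactly one $[k]$-factor while shortening the correlator), terminating in the two-operator evaluation $\langle \, \E_a(b'\h) \, \E_{-a}(b''\h) \, \rangle = [ab]/[b]$. You merely organise the bookkeeping as a double induction (on $m$ and on the position of the rightmost positive subscript) where the paper argues directly via these invariants; the only points left tacit --- that the degenerate cases with no positive subscript are caught by Step~(2) and contribute the empty sum, and that $a_i b_{i+1} - a_{i+1} b_i > 0$ so each new factor is a genuine $[k]$ with $k$ a positive integer --- follow immediately from the hypotheses, as the paper notes.
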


\begin{proof}
\cref{alg:connected} calculates the connected correlator by iteratively applying the commutator relation in Step~(4), in which a product of two $\E$-operators
\[
\E_{a_i}(b_i\h) \, \E_{a_{i+1}}(b_{i+1}\h)
\]
with $a_i > 0$ and $a_{i+1} \leq 0$ is replaced by the two terms
\[
\E_{a_{i+1}}(b_{i+1}\h) \, \E_{a_i}(b_i\h) + [a_ib_{i+1}-a_{i+1}b_i] \, \E_{a_i+a_{i+1}}((b_i+b_{i+1})\h).
\]
Observe that the conditions on the coefficients appearing here force the inequality $a_ib_{i+1}-a_{i+1}b_i > 0$.

During the process of applying \cref{alg:connected}, new terms are introduced that involve connected correlators multiplied by $q$-integer factors. These connected correlators continue to satisfy the conditions of the lemma.

One can observe that after the application of Step~(4) of \cref{alg:connected},
\begin{itemize}
\item the maximum subscript of an $\E$-operator cannot increase, 
\item the sum of the arguments in any product of $\E$-operators is invariant, and
\item the ``degree'' of each term, which measures the number of $q$-integers plus the number of $\E$-operators, is invariant.
\end{itemize}

It follows that the connected correlator $\langle \, \E_{a_1}(b_1\h) \, \E_{a_2}(b_2\h) \cdots \E_{a_m}(b_m\h) \, \rangle^\circ$ can be expressed as a sum of terms of the form
\[
[k_1] \, [k_2] \cdots [k_{m-2}] \, \langle \, \E_a(b'\h) \, \E_{-a}(b''\h) \, \rangle,
\]
for positive integers $k_1, k_2, \ldots, k_{m-2}$, a positive integer $a \leq \max\{a_1, a_2, \ldots, a_m\}$ and integers $b' + b'' = b$. Now applying the commutation relation of \cref{lem:Ecommutation} to this expression produces that appearing in the statement of the lemma, as desired.
\end{proof}

In order to prove \cref{thm:main}, it is necessary to have some control over the expressions 
\[
[k_1] \, [k_2] \cdots [k_\ell] \, \langle \, \E_{a_1}(b_1\h) \, \E_{a_2}(b_2\h) \cdots \E_{a_m}(b_m\h) \, \rangle
\]
that arise while applying \cref{alg:connected}. We do this by assigning a {\em score} to such an expression, defined to be the integer
\[
(k_1 + k_2 + \cdots + k_\ell) - (b_1 + b_2 + \cdots + b_m) + \sum_{1 \leq i < j \leq m} (a_i b_j - a_j b_i).
\]

\begin{lemma} \label{lem:score}
Let $a_1, a_2, \ldots, a_m$ be integers that sum to $0$ and let $b_1, b_2, \ldots, b_m$ be non-negative integers such that if $a_i \geq 0$, then $b_i > 0$. Consider the process of applying \cref{alg:connected} to compute the connected correlator $\langle \, \E_{a_1}(b_1\h) \, \E_{a_2}(b_2\h) \cdots \E_{a_m}(b_m\h) \, \rangle^\circ$. In each application of Step~(4) of the algorithm to a term with score $s$, the commutation term also has score $s$, while the passing term has score $s - 2t$ for some positive integer $t$. In particular, at any stage of the algorithm, each term produced has a score that is at most the score of the original connected correlator, and with the same parity.
\end{lemma}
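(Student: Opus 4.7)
The plan is to inspect Step~(4) of \cref{alg:connected} directly and track how each of the three summands in the definition of the score changes. Step~(4) applies the commutation relation to a product $\E_{a_i}(b_i\h) \, \E_{a_{i+1}}(b_{i+1}\h)$ where $a_i > 0$ is the rightmost positive subscript, so that $a_{i+1} \leq 0$. The passing term keeps the same $\E$-operators but swaps their order, while the commutation term introduces a new $q$-integer factor $[a_i b_{i+1} - a_{i+1} b_i]$ and fuses the two $\E$-operators into $\E_{a_i+a_{i+1}}((b_i+b_{i+1})\h)$. A useful preliminary remark is that under the standing conditions, $a_i b_{i+1} - a_{i+1} b_i$ is a positive integer: if $a_{i+1} < 0$ then $-a_{i+1} b_i > 0$ because $a_i > 0$ forces $b_i > 0$, while if $a_{i+1} = 0$ then $b_{i+1} > 0$ forces $a_i b_{i+1} > 0$.

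For the commutation term, I would argue as follows. The quantity $k_1 + \cdots + k_\ell$ grows by exactly $a_i b_{i+1} - a_{i+1} b_i$ due to the new $q$-integer. The sum $b_1 + \cdots + b_m$ is unchanged, since $b_i + b_{i+1}$ replaces $b_i$ and $b_{i+1}$. For the bilinear sum $\sum_{j<k}(a_j b_k - a_k b_j)$, I would note that for any index $p \notin \{i, i+1\}$ the contributions from the old pairs involving position $i$ and position $i+1$ combine linearly into the contribution from the new fused position, so these cancel exactly; only the $(i,i+1)$ pair disappears, giving a net change of $-(a_i b_{i+1} - a_{i+1} b_i)$. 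The three changes sum to zero, so the score is preserved.

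For the passing term, $k_1 + \cdots + k_\ell$ and $b_1 + \cdots + b_m$ are both unchanged. The key computation is again the bilinear sum. By the same bilinearity argument, the contributions of pairs $(p, i)$ and $(p, i+1)$ (or $(i, p)$ and $(i+1, p)$) for $p \notin \{i, i+1\}$ are invariant under the swap of positions $i$ and $i+1$; only the single pair $(i, i+1)$ flips sign, and its contribution changes from $a_i b_{i+1} - a_{i+1} b_i$ to $-(a_i b_{i+1} - a_{i+1} b_i)$. This yields a score change of $-2(a_i b_{i+1} - a_{i+1} b_i) = -2t$ with $t > 0$ by the preliminary remark.

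The final assertion, that every term produced at any stage of \cref{alg:connected} has score at most the original score and of the same parity, follows immediately by induction on the number of applications of Step~(4), since each application either keeps the score fixed or decreases it by an even positive amount. The only obstacle worth noting is the bookkeeping for the cross-terms in the bilinear sum; writing out one case (say pairs of the form $(p, i)$ and $(p, i+1)$ for $p < i$) carefully is enough, as the remaining cases are symmetric.
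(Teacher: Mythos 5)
Your proposal is correct and follows essentially the same route as the paper's proof: a direct computation of how each of the three summands in the score changes under the commutation and passing terms of Step~(4) of \cref{alg:connected}, together with the observation that $a_ib_{i+1}-a_{i+1}b_i>0$ under the standing conditions. Your bilinearity argument for the cross-terms is just a more compact phrasing of the explicit cancellation the paper writes out, so there is no substantive difference.
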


\begin{proof}
Consider the expression $[k_1] \, [k_2] \cdots [k_\ell] \, \langle \, \E_{a_1}(b_1\h) \, \E_{a_2}(b_2\h) \cdots \E_{a_m}(b_m\h) \, \rangle$, whose score is
\[
(k_1 + k_2 + \cdots + k_\ell) - (b_1 + b_2 + \cdots + b_m) + \sum_{1 \leq i < j \leq m} (a_i b_j - a_j b_i).
\]
Applying Step~(4) of \cref{alg:connected} produces a commutation term
\[
[k_1] \, [k_2] \cdots [k_\ell] \, [a_ib_{i+1}-a_{i+1}b_i] \, \langle \, \E_{a_1}(b_1\h) \cdots \E_{a_i+a_{i+1}}((b_i+b_{i+1})\h) \cdots \E_{a_m}(b_m\h) \, \rangle^\circ,
\]
which increases the score by
\begin{multline*}
(a_ib_{i+1}-a_{i+1}b_i) + \sum_{k=1}^{i-1} \big( a_k(b_i+b_{i+1}) - (a_i+a_{i+1})b_k \big)
+ \sum_{k=i+2}^m \big( (a_i+a_{i+1}) b_k - a_k (b_i+b_{i+1}) \big) \\
- \sum_{k=1}^{i-1} \big( (a_kb_i - a_ib_k) + (a_kb_{i+1} - a_{i+1}b_k) \big) - (a_ib_{i+1}-a_{i+1}b_i) - \sum_{k=i+2}^m \big( (a_ib_k - a_kb_i) + (a_{i+1}b_k - a_kb_{i+1}) \big) = 0,
\end{multline*}
and a passing term 
\[
[k_1] \, [k_2] \cdots [k_\ell] \, \langle \, \E_{a_1}(b_1\h) \cdots \E_{a_{i+1}}(b_{i+1}\h) \, \E_{a_i}(b_i\h) \cdots \E_{a_n}(b_n\h) \, \rangle^\circ,
\]
which increases the score by
\[
(a_{i+1}b_i - a_ib_{i+1}) - (a_ib_{i+1} - a_{i+1}b_i) = -2 (a_ib_{i+1} - a_{i+1}b_i).
\]
It remains to observe that the conditions of the problem guarantee that $a_ib_{i+1} - a_{i+1}b_i > 0$ and that these conditions persist for all terms created by the algorithm.
\end{proof}

We are now in a position to prove our main theorem, which states that for fixed $\mu = (\mu_1, \ldots, \mu_n)$ with sum $d \geq 2$, the function $H_{g;\mu}$ can be expressed as follows, where $C(\mu, m) \in \mathbb{Z}$.
\[
H_{g;\mu} = \frac{2}{d! \, \mu_1 \cdots \mu_n} \sum_{1 \leq m \leq \binom{d}{2}} C(\mu, m) \cdot m^{d+2g-2+n}
\]

\begin{proof}[Proof of \cref{thm:main}]
Recall from \cref{prop:conhurwitzwedge} that
\begin{equation} \label{eq:fmu1}
F_\mu(\h) = \sum_{g = 0}^\infty H_{g; \mu} \, \frac{\h^{d+2g-2+n}}{(d+2g-2+n)!} = \frac{1}{d! \, \mu_1 \cdots \mu_n} \, \langle \, \E_1(\h)^d \, \E_{-\mu_1}(0) \, \E_{-\mu_2}(0) \cdots \E_{-\mu_n}(0) \, \rangle^\circ.
\end{equation}

By \cref{lem:terms}, the connected correlator here can be expressed as a sum of finitely many terms of the form $[k_1] \, [k_2] \cdots [k_\ell]$, where we set $\ell = d+n-2$. By \cref{lem:score}, the score of any such term must satisfy
\begin{equation} \label{eq:maxparity}
k_1 + k_2 + \cdots + k_\ell \leq d(d-1) \qquad \text{and} \qquad k_1 + k_2 + \cdots + k_\ell \equiv d(d-1) \equiv 0 \pmod{2}.
\end{equation}
The expression $d(d-1)$ here is obtained by computing the score of $\langle \, \E_1(\h)^d \, \E_{-\mu_1}(0) \, \E_{-\mu_2}(0) \cdots \E_{-\mu_n}(0) \, \rangle^\circ$.

Expanding in powers of $\h$ yields
\begin{align*}
[k_1] \, [k_2] \cdots [k_\ell] &= \big( e^{k_1 \h/2} - e^{-k_1 \h/2} \big) \big( e^{k_2 \h/2} - e^{-k_2 \h/2} \big) \cdots \big( e^{k_\ell \h/2} - e^{-k_\ell \h/2} \big) \\
&= \sum_{s_1, s_2, \ldots, s_\ell \in \{\pm 1\}} s_1 s_2 \cdots s_\ell \, e^{(s_1 k_1 + s_2 k_2 + \cdots + s_\ell k_\ell)\h/2}.
\end{align*}

Now pair the terms in this summation that have opposite choices of signs for $s_1, s_2, \ldots, s_\ell$. If $\ell$ is odd, then $[k_1] \, [k_2] \cdots [k_\ell]$ is a sum of terms of the form
\begin{equation} \label{eq:exponentials}
\pm (e^{m\h} - e^{-m\h}),
\end{equation}
where $m = \frac{s_1 k_1 + s_2 k_2 + \cdots + s_\ell k_\ell}{2}$ for some $s_1, s_2, \ldots, s_\ell \in \{ \pm 1\}$.
We know from \cref{eq:maxparity} that
\[
m = \frac{s_1 k_1 + s_2 k_2 + \cdots + s_\ell k_\ell}{2} \leq \frac{k_1 + k_2 + \cdots + k_\ell}{2} \leq \frac{d(d-1)}{2}
\]
and that
\[
m = \frac{s_1 k_1 + s_2 k_2 + \cdots + s_\ell k_\ell}{2} \in \mathbb{Z}.
\]
In fact, we can take $m$ to be a positive integer, since $e^{m\h} - e^{-m\h}$ is odd and vanishes for $m = 0$.

Hence, there exist integers $C(\mu, 1), C(\mu, 2), \ldots, C(\mu, \binom{d}{2})$ such that 
\begin{align} \label{eq:fmu2}
F_\mu(\h) &= \frac{1}{d! \, \mu_1 \cdots \mu_n} \sum_{1 \leq m \leq \binom{d}{2}} C(\mu, m) \, (e^{m\h} - e^{-m\h}) \notag \\
&= \frac{1}{d! \, \mu_1 \cdots \mu_n} \sum_{1 \leq m \leq \binom{d}{2}} C(\mu, m) \sum_{k=0}^\infty \frac{2}{(2k+1)!} \, m^{2k+1} \h^{2k+1} \notag \\
&= \frac{1}{d! \, \mu_1 \cdots \mu_n} \sum_{k=0}^\infty \frac{2}{(2k+1)!} \sum_{1 \leq m \leq \binom{d}{2}} C(\mu, m) \, m^{2k+1} \h^{2k+1}.
\end{align}
Comparing coefficients of powers of $\h$ in \cref{eq:fmu1,eq:fmu2}, we have
\begin{equation} \label{eq:structure}
H_{g;\mu} = \frac{2}{d! \, \mu_1 \cdots \mu_n} \sum_{1 \leq m \leq \binom{d}{2}} C(\mu, m) \cdot m^{d+2g-2+n}.
\end{equation}

This essentially completes the proof of the theorem, but only under the assumption that $\ell = d+n-2$ is odd. However, the case when $\ell$ is even can be handled in an entirely analogous manner. The only difference is that \cref{eq:exponentials} changes to $\pm (e^{m\h} + e^{-m\h})$ and we must now allow the possibility that $m = 0$. This contributes to \cref{eq:structure} only if $d+2g-2+n = 0$, in which case the Hurwitz number must be $H_{0;1} = 1$. Note that we have excluded this case from the statement of the theorem by requiring $d \geq 2$.
\end{proof}

The coefficients $C(\mu,m)$ appearing in \cref{thm:main} can be easily extracted from \cref{alg:connected} using a computer algebra system. The data in \cref{tab:coefficients} was produced using SageMath, with higher degree coefficients omitted only for brevity~\cite{sagemath}.

\begin{table}[ht!]
\centering
\begin{tabularx}{\textwidth}{>{\centering\arraybackslash}X*{10}{Y}} \toprule
$\mu$ & \multicolumn{10}{c}{$m$} \\ \cmidrule{2-11}
 & $1$ & $2$ & $3$ & $4$ & $5$ & $6$ & $7$ & $8$ & $9$ & $10$ \\ \midrule
$2$ & $1$ \\
$11$ & $1$ \\ \midrule
$3$ & $0$ & $0$ & $1$ \\
$21$ & $-3$ & $0$ & $1$ \\
$111$ & $-9$ & $0$ & $1$ \\ \midrule
$4$ & $0$ & $-3$ & $0$ & $0$ & $0$ & $1$ \\
$31$ & $0$ & $0$ & $-4$ & $0$ & $0$ & $1$ \\
$22$ & $0$ & $-9$ & $0$ & $0$ & $0$ & $1$ \\
$211$ & $24$ & $-3$ & $-8$ & $0$ & $0$ & $1$ \\
$1111$ & $144$ & $-9$ & $-16$ & $0$ & $0$ & $1$ \\ \midrule
$5$ & $0$ & $0$ & $0$ & $0$ & $-4$ & $0$ & $0$ & $0$ & $0$ & $1$ \\
$41$ & $0$ & $10$ & $0$ & $0$ & $0$ & $-5$ & $0$ & $0$ & $0$ & $1$ \\
$32$ & $20$ & $15$ & $0$ & $-10$ & $-4$ & $0$ & $0$ & $0$ & $0$ & $1$ \\
$311$ & $20$ & $-15$ & $40$ & $-10$ & $4$ & $-10$ & $0$ & $0$ & $0$ & $1$ \\
$221$ & $0$ & $100$ & $0$ & $-20$ & $0$ & $-5$ & $0$ & $0$ & $0$ & $1$ \\
$2111$ & $-400$ & $120$ & $120$ & $-40$ & $8$ & $-15$ & $0$ & $0$ & $0$ & $1$ \\
$11111$ & $-4000$ & $600$ & $400$ & $-100$ & $16$ & $-25$ & $0$ & $0$ & $0$ &$1$ \\ \bottomrule
\end{tabularx}
\caption{The coefficients $C(\mu, m)$ for $2 \leq |\mu| \leq 5$ and $1 \leq m \leq |\mu| (|\mu|-1) / 2$, which appear in the statement of \cref{thm:main}.}
\label{tab:coefficients}
\end{table}

\section{Observations, generalisations and variations} \label{sec:extras}

In this section, we prove a result on the large genus asymptotics of Hurwitz numbers that follows from our main theorem. We pose the question of whether other natural enumerative problems possess similar structure to that of Hurwitz numbers for fixed ramification profile and varying genus. As an example, we replicate the analysis of the previous section to prove that orbifold Hurwitz numbers are governed by the same structure. We then provide numerical evidence to support the conjecture that monotone Hurwitz numbers can be expressed as a linear combination of exponentials plus a linear term.

\subsection{Large genus asymptotics}

The large genus asymptotics for various problems in enumerative geometry has been of significant interest in recent years~\cite{agg21, EGGGL23}. \cref{thm:main} on the structure of Hurwitz numbers leads to \cref{thm:asymptotics} on their large genus asymptotics. It states that for fixed $\mu = (\mu_1, \ldots, \mu_n)$ with sum $d$, we have
\[
H_{g;\mu} \sim \frac{2}{d! \, \mu_1 \cdots \mu_n} \binom{d}{2}^{d+2g-2+n} \quad \text{as } g \to \infty.
\]

\begin{proof}[Proof of \cref{thm:asymptotics}]
From \cref{thm:main}, we have the following equation, so the theorem follows immediately if we can show that $C(\mu, \binom{d}{2}) = 1$.
\[
H_{g;\mu} = \frac{2}{d! \, \mu_1 \cdots \mu_n} \sum_{1 \leq m \leq \binom{d}{2}} C(\mu, m) \cdot m^{d+2g-2+n},
\]

From \cref{lem:score}, we know that applying \cref{alg:disconnected} to
$\langle \, \E_1(\h)^d \, \E_{-\mu_1}(0) \, \E_{-\mu_2}(0) \cdots \E_{-\mu_n}(0) \, \rangle
$ produces terms with score at most $d(d-1)$. Furthermore, a term with score equal to $d(d-1)$ can only be achieved by always taking the commutation term in Step~(4) of the algorithm, since the passing term strictly reduces the score. In the process of calculating these commutation terms iteratively, one only encounters products of $\E$-operators in which those with positive subscripts appear to the left of those with non-positive subscripts. It follows that at the conclusion of the algorithm, there is only one term with score equal to $d(d-1)$ and that it contributes to the connected correlator $\langle \, \E_1(\h)^d \, \E_{-\mu_1}(0) \, \E_{-\mu_2}(0) \cdots \E_{-\mu_n}(0) \, \rangle^\circ$. This term is necessarily of the form $[k_1] \, [k_2] \cdots [k_\ell]$ for $k_1 + k_2 + \cdots + k_\ell = d(d-1)$.

From the proof of \cref{thm:main}, we see that $C(\mu, \binom{d}{2})$ is equal to a signed sum of the coefficients of $[k_1] \, [k_2] \cdots [k_\ell]$ for $k_1 + k_2 + \cdots + k_\ell = d(d-1)$, appearing in the connected correlator. From the previous discussion, we deduce that $C(\mu, \binom{d}{2}) = \pm 1$, but the sign must be positive as Hurwitz numbers are positive. So it is indeed the case that $C(\mu, \binom{d}{2}) = 1$ and we have the large genus asymptotics as stated.
\end{proof}

The approximation given by the asymptotic formula of \cref{thm:asymptotics} is rather precise. For example, the ratio of $H_{3;32}$ to its approximation is $1.0023778\cdots$, while the ratio of $H_{20;32}$ to its approximation is $1.00000000000011369\cdots$.

The data of \cref{tab:coefficients} suggests that we not only have $C(\mu, \binom{d}{2}) = 1$, but also $C(\mu, m) = 0$ for $\binom{d-1}{2} < m < \binom{d}{2}$. This was previously known in the case $\mu = (1, 1, \ldots, 1)$ considered by Hurwitz~\cite{hur1891, dub-yan-zag17}. We conjecture that this holds more generally, which allows for a more refined statement on the large genus asymptotics.

\begin{conjecture}
For fixed $\mu = (\mu_1, \ldots, \mu_n)$ with sum $d \geq 2$, we have $C(\mu, m) = 0$ for $\binom{d-1}{2} < m < \binom{d}{2}$. It then follows that
\[
H_{g;\mu} = \frac{2}{d! \, \mu_1 \cdots \mu_n} \binom{d}{2}^{d+2g-2+n} + O \Bigg( \binom{d-1}{2}^{d+2g-2+n} \Bigg) \quad \text{as } g \to \infty.
\]
\end{conjecture}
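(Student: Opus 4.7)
My plan is to bound the support of the connected correlator, viewed as a Laurent polynomial in $q = e^\h$, by first analysing the disconnected correlator directly from \cref{def:Eoperators}, and then passing to the connected version via the inclusion--exclusion relation \cref{eq:Finclusionexclusion}.

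The first step is to show that the disconnected correlator admits the expansion
\[
\langle \, \E_1(\h)^d \, \E_{-\mu_1}(0) \cdots \E_{-\mu_n}(0) \, \rangle = \sum_{\lambda \vdash d} R(\mu, \lambda) \, f^\lambda \, q^{c(\lambda)},
\]
where $c(\lambda) = \sum_{\square \in \lambda} c(\square)$ is the content sum of $\lambda$, $f^\lambda$ is the number of standard Young tableaux of shape $\lambda$, and $R(\mu, \lambda) \in \mathbb{Z}$ is independent of $\h$. I would insert a resolution of the identity $\sum_\lambda | \lambda \rangle \langle \lambda |$ between $\E_1(\h)^d$ and the remaining operators; degree matching forces $| \lambda | = d$. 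The factor $\langle \lambda | \E_{-\mu_1}(0) \cdots \E_{-\mu_n}(0) | \emptyset \rangle$ is $\h$-independent because all arguments are $0$, and the factor $\langle \emptyset | \E_1(\h)^d | \lambda \rangle$ is a signed sum over sequences of single-box removals that empty $\lambda$; each such sequence is a standard Young tableau of shape $\lambda$, has sign $+1$ (since single boxes are ribbons of height zero), and carries total weight $\prod_\square e^{c(\square) \h} = q^{c(\lambda)}$ independently of the order of removal. Hence the $q$-support of the disconnected correlator lies in $\{c(\lambda) : \lambda \vdash d\}$. The classical identity $c(\lambda) = \sum_i \binom{\lambda_i}{2} - \sum_j \binom{\lambda_j'}{2}$ combined with convexity of $\binom{\cdot}{2}$ then yields the key combinatorial bound: $|c(\lambda)| = \binom{d}{2}$ only for $\lambda \in \{(d), (1^d)\}$, and otherwise $|c(\lambda)| \leq \binom{d-1}{2} - 1$, saturated at $(d-1, 1)$ and $(2, 1^{d-2})$.

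Next, I would Möbius-invert \cref{eq:Finclusionexclusion} on the partition lattice of $[n]$ and absorb the multinomial normalisations to express the connected correlator as the disconnected correlator plus a signed integer-coefficient linear combination of products $\prod_i \langle \E_1(\h)^{d_i} \prod_{k \in S_i} \E_{-\mu_k}(0) \rangle$, indexed by set partitions $P = (S_1, \ldots, S_m)$ of $[n]$ with $m \geq 2$ blocks and $d_i := |\mu_{S_i}|$. Each factor has $q$-support bounded by $\binom{d_i}{2}$, so each product has support bounded by $\sum_i \binom{d_i}{2}$. By convexity of $\binom{\cdot}{2}$, the maximum of $\sum_i \binom{d_i}{2}$ subject to $d_1 + \cdots + d_m = d$, $d_i \geq 1$, $m \geq 2$ is attained at $(d_1, d_2) = (d-1, 1)$ and equals $\binom{d-1}{2}$. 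Combining with the bound from the previous paragraph, the support of the connected correlator is contained in $\{\pm \binom{d}{2}\} \cup [-\binom{d-1}{2}, \binom{d-1}{2}]$, which is exactly the vanishing $C(\mu, m) = 0$ for $\binom{d-1}{2} < m < \binom{d}{2}$. The refined asymptotic then follows from \cref{thm:main} since $C(\mu, \binom{d}{2}) = 1$ and every remaining contribution to $H_{g;\mu}$ comes from $|m| \leq \binom{d-1}{2}$.

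The main obstacle is the first step. The paper works throughout with the algorithmic commutator expansion of correlators into $q$-integer polynomials, whereas the argument above bypasses this and uses the action of $\E$-operators on $|\lambda\rangle$ via \cref{def:Eoperators} directly; some care is needed in handling signs and weights when removing boxes, and in confirming that no cancellation spoils the support description of the disconnected correlator. The remaining ingredients --- Möbius inversion on the partition lattice and the convexity inequality for binomial coefficients --- are straightforward.
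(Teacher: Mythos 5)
Be aware that the paper does not prove this statement at all: it is stated as a conjecture, supported only by the data of \cref{tab:coefficients} and the previously known case $\mu = (1,\ldots,1)$. Your argument is therefore not a variant of the paper's proof but a genuine proof strategy for the open statement, and as far as I can check it is sound. Your Step 1 is essentially the classical Burnside--Frobenius route (the one underlying Hurwitz's and Dubrovin--Yang--Zagier's treatment of $\mu=(1^d)$), recast in the wedge-space language: inserting $\sum_{\lambda} |\lambda\rangle\langle\lambda|$ and using \cref{def:Eoperators}, a $1$-ribbon has height $0$ (sign $+1$) and centre equal to the content of the box, so $\langle\,\emptyset\,|\,\E_1(\h)^{d}\,|\,\lambda\,\rangle = f^{\lambda} q^{c(\lambda)}$ with no cancellation, while $\langle\,\lambda\,|\,\E_{-\mu_1}(0)\cdots\E_{-\mu_n}(0)\,|\,\emptyset\,\rangle$ is the Murnaghan--Nakayama evaluation $\chi^{\lambda}(\mu)\in\mathbb{Z}$; the content-sum gap ($|c(\lambda)|=\binom{d}{2}$ only for $(d),(1^d)$, otherwise $|c(\lambda)|\leq\binom{d-1}{2}-1$, since $\lambda_1\leq d-1$ forces $\sum_i\binom{\lambda_i}{2}\leq\binom{d-1}{2}$ and $\lambda_1'\geq 2$ forces the column correction to be at least $1$) is correct. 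The genuinely new ingredient, and the part the paper's score argument (\cref{lem:score}) cannot see because it only yields the bound $\binom{d}{2}$, is your Step 2: the disconnected-to-connected correction terms are indexed by set partitions with $m\geq 2$ blocks of weights $d_i\geq 1$, and their $q$-support is bounded by $\sum_i\binom{d_i}{2}\leq\binom{d-1}{2}$ by convexity, so the connected correlator has no exponents strictly between $\binom{d-1}{2}$ and $\binom{d}{2}$; since $C(\mu,m)$ is precisely the coefficient of $q^{m}$ in the connected correlator (proof of \cref{thm:main}) and $C(\mu,\binom{d}{2})=1$ is proved in \cref{thm:asymptotics}, the refined asymptotic follows. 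Two points to tighten in a write-up: (i) you can avoid Möbius inversion entirely by isolating the $m=1$ term of \cref{eq:Einclusionexclusion} and bounding the remaining products of \emph{connected} correlators of the subproblems either by induction with your Step 1 or directly by \cref{lem:terms,lem:score}, which sidesteps any bookkeeping of the multinomial prefactors; (ii) state explicitly that the support bound you need is one-sided containment (an upper bound on exponents), so possible cancellations in the character sum are harmless. I checked your mechanism against the tabulated correlators (e.g.\ $\mu=(2,1)$ and $\mu=(2,2)$) and it reproduces the observed vanishing; modulo the details you yourself flag, this resolves the conjectured vanishing, not merely verifies it.
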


\subsection{Orbifold Hurwitz numbers} \label{subsec:orbifold}

Many properties of Hurwitz numbers carry over to the generalisation known as orbifold Hurwitz numbers, which arise in the enumerative geometry of the orbifold $\mathbb{CP}^1[r]$~\cite{BHLM14, do-lei-nor16, joh-pan-tse11}. The proof of \cref{thm:main} can be adapted to prove an analogous structure theorem for orbifold Hurwitz numbers.

\begin{definition}
Fix a positive integer $r$. Let $g$ be an integer and $\mu = (\mu_1, \ldots, \mu_n)$ be a tuple of positive integers. The {\em $r$-orbifold Hurwitz number} $H^{[r]}_{g;\mu}$ is the weighted enumeration of genus $g$ connected branched covers of $\mathbb{CP}^1$ with ramification profile $\mu$ over $\infty \in \mathbb{CP}^1$, ramification profile $(r, r, \ldots, r)$ over $0 \in \mathbb{CP}^1$, simple ramification over $\frac{|\mu|}{r} + 2g - 2 + n$ prescribed points of $\mathbb{CP}^1$, and no branching elsewhere. The preimages of $\infty \in \mathbb{CP}^1$ are labelled from $1$ to $n$ such that the point labelled $i$ has ramification index~$\mu_i$.
\end{definition}

Observe that the $r$-orbifold Hurwitz number $H^{[r]}_{g;\mu}$ must be zero unless $|\mu|$ is divisible by $r$. Again, by a standard monodromy argument using the Riemann existence theorem, one can express orbifold Hurwitz numbers as an equivalent enumeration of permutations in the symmetric group, although we omit the statement here for brevity. Orbifold Hurwitz numbers can be computed using the infinite wedge space via the following analogue of \cref{prop:conhurwitzwedge}.

\begin{proposition} \label{prop:orbifoldwedge}
For fixed $\mu = (\mu_1, \ldots, \mu_n)$ with sum $d$ that is divisible by $r$, the exponential generating function for $r$-orbifold Hurwitz numbers
\[
F^{[r]}_\mu(\h) = \sum_{g = 0}^\infty H^{[r]}_{g;\mu} \, \frac{\h^{\frac{d}{r}+2g-2+n}}{(\frac{d}{r}+2g-2+n)!}
\]
can be expressed as the connected correlator
\[
F^{[r]}_\mu(\h) = \frac{1}{r^{d/r} (d/r)! \, \mu_1 \cdots \mu_n} \, \langle \, \E_r(r\h)^{d/r} \, \E_{-\mu_1}(0) \cdots \E_{-\mu_n}(0) \, \rangle^\circ.
\]
\end{proposition}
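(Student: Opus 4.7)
The plan is to follow the same two-step strategy as in the proof of \cref{prop:conhurwitzwedge}, first establishing the disconnected analogue and then descending to the connected statement via inclusion--exclusion. First I would prove the disconnected identity
\[
F^{[r],\bullet}_\mu(\h) = \frac{1}{r^{d/r} (d/r)! \, \mu_1 \cdots \mu_n} \, \langle \, \E_r(r\h)^{d/r} \, \E_{-\mu_1}(0) \cdots \E_{-\mu_n}(0) \, \rangle,
\]
which is the orbifold analogue of \cref{prop:hurwitzwedge}. This follows from the same Burnside--Frobenius character-theoretic derivation sketched in the remarks following that proposition: orbifold Hurwitz numbers enumerate factorisations of a permutation of cycle type $\mu$ into a product of transpositions and a permutation of cycle type $(r, r, \ldots, r)$, and the operator $\E_r(r\h)$ is precisely the infinite-wedge building block that packages a single $r$-cycle together with its allotment of simple ramification points, in the spirit of the completed cycles developed in~\cite{oko-pan06, sha-spi-zvo12}. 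The normalising factor $r^{d/r}(d/r)!$ absorbs the symmetry among the $d/r$ identical $r$-cycles, and $\mu_1 \cdots \mu_n$ serves as the usual cyclic automorphism factor at $\infty$. This disconnected identity already appears, in one form or another, in the orbifold Hurwitz number literature~\cite{joh-pan-tse11, BHLM14, do-lei-nor16}.

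With the disconnected identity in hand, I would pass to the connected statement by mimicking the inclusion--exclusion argument in the proof of \cref{prop:conhurwitzwedge}. The orbifold analogue of \cref{eq:Finclusionexclusion} expresses $F^{[r],\bullet}_\mu$ as a sum over partitions $S_1 \sqcup \cdots \sqcup S_m = \{1, \ldots, n\}$ of products $\prod_i F^{[r]}_{\mu_{S_i}}$, where only partitions satisfying $r \mid |\mu_{S_i}|$ for every $i$ contribute non-trivially. Matching this on the correlator side requires the identity
\[
\Big\langle \, \E_r(r\h)^{d/r} \prod_{j=1}^n \E_{-\mu_j}(0) \, \Big\rangle = \sum_{S_1 \sqcup \cdots \sqcup S_m = \{1, \ldots, n\}} \frac{(d/r)!}{\prod_{i=1}^m (|\mu_{S_i}|/r)!} \prod_{i=1}^m \Big\langle \, \E_r(r\h)^{|\mu_{S_i}|/r} \prod_{k \in S_i} \E_{-\mu_k}(0) \, \Big\rangle^\circ,
\]
whose justification is verbatim that given for \cref{eq:Einclusionexclusion}: in the execution of \cref{alg:disconnected}, each initial $\E$-operator ultimately contributes to exactly one $\E_0$ factor in the final evaluation, commutation terms are only ever taken between operators destined for the same $\E_0$, and so the disconnected correlator decomposes as a sum over partitions of the operators indexed by which connected cluster they end up in. The multinomial coefficient $(d/r)!/\prod_i (|\mu_{S_i}|/r)!$ counts the distributions of the $d/r$ identical $\E_r(r\h)$ operators among the $m$ clusters.

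The base case is $n = 1$, in which case $\mu = (d)$ with $r \mid d$; the full cyclic ramification over $\infty$ forces every branched cover to be connected, so $F^{[r]}_{(d)} = F^{[r],\bullet}_{(d)}$, while the grading on $\Lambda$ forces every non-trivial partition of the $\E$-operators to be killed by Step~(1) of \cref{alg:disconnected}, so the disconnected correlator already coincides with its connected counterpart. The induction on $n$ then proceeds in the standard way by solving the inclusion--exclusion recursion. The main obstacle is the first step, namely identifying $\E_r(r\h)$ as the correct character-theoretic operator that simultaneously encodes the $r$-cycle ramification and the accompanying simple ramifications; once this is granted, the remainder of the proof is a purely combinatorial transcription of the argument already carried out for classical Hurwitz numbers, requiring only the cosmetic replacements $\E_1(\h) \leadsto \E_r(r\h)$ and $|\mu| \leadsto d/r$ in the combinatorial bookkeeping.
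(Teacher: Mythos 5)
Your proposal is correct and follows essentially the route the paper intends: the paper states \cref{prop:orbifoldwedge} without proof, as the direct analogue of \cref{prop:conhurwitzwedge}, with the disconnected identity coming from the orbifold/double Hurwitz literature (the operator $\E_r(r\h)$ arising as the conjugate of $\alpha_r = \E_r(0)$ by $e^{\h \mathcal{F}_2}$, which encodes the simple ramification, rather than from completed cycles) and the passage to connected correlators being exactly the inclusion--exclusion argument of \cref{eq:Einclusionexclusion} that you transcribe, including the correct multinomial factor $(d/r)!/\prod_i(|\mu_{S_i}|/r)!$ and the vanishing of clusters with $r \nmid |\mu_{S_i}|$. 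Your base case $n=1$ and degree-consideration argument also match the paper's treatment of the corresponding step for ordinary Hurwitz numbers.
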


The connected correlator of $\E$-operators appearing in \cref{prop:orbifoldwedge} is amenable to the same techniques used in \cref{sec:proof} and we obtain the following.

\begin{theorem} \label{thm:orbifold}
For fixed $\mu = (\mu_1, \ldots, \mu_n)$ with sum $d$ that is divisible by $r$, the function $H^{[r]}_{g;\mu}$ can be expressed as follows, where $C^{[r]}(\mu, m) \in \mathbb{Z}$.
\[
H^{[r]}_{g;\mu} = \frac{2}{r^{d/r} (d/r)! \, \mu_1 \cdots \mu_n} \sum_{1 \leq m \leq d(d-1)/2} C^{[r]}(\mu, m) \cdot m^{\frac{d}{r}+2g-2+n}
\]
\end{theorem}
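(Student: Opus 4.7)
The plan is to mimic the proof of \cref{thm:main} almost verbatim, starting from \cref{prop:orbifoldwedge} in place of \cref{prop:conhurwitzwedge}, and applying \cref{alg:connected} to the connected correlator $\langle \, \E_r(r\h)^{d/r} \, \E_{-\mu_1}(0) \cdots \E_{-\mu_n}(0) \, \rangle^\circ$. The hypotheses of \cref{alg:connected} are satisfied: each operator $\E_r(r\h)$ has $a = r > 0$ and $b = r > 0$, while each $\E_{-\mu_i}(0)$ has $a < 0$ so $b = 0$ is allowed.

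Since \cref{lem:terms} and \cref{lem:score} are stated in full generality for any input satisfying the hypotheses of \cref{alg:connected}, both carry over without modification. In particular, the connected correlator expands as a finite sum of $q$-integer products $[k_1] \, [k_2] \cdots [k_{\ell}]$ with $\ell = (d/r) + n - 2$, and every such product has score bounded above by the initial score of the starting correlator and congruent to it modulo $2$.

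The key calculation is therefore the initial score of $\E_r(r\h)^{d/r} \, \E_{-\mu_1}(0) \cdots \E_{-\mu_n}(0)$. Within the first group of $d/r$ operators, each pair contributes $r \cdot r - r \cdot r = 0$ to $\sum_{i<j}(a_i b_j - a_j b_i)$; within the second group, each pair contributes $0$ because every $b_j$ is zero; and each of the $(d/r) \cdot n$ cross pairs contributes $r \cdot 0 - (-\mu_j) \cdot r = r\mu_j$, summing to $(d/r) \cdot r \cdot d = d^2$. Since $\sum_i b_i = (d/r) \cdot r = d$, the initial score is $d^2 - d = d(d-1)$, precisely as in the Hurwitz case. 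Consequently the products $[k_1] \cdots [k_\ell]$ appearing in the expansion satisfy $k_1 + \cdots + k_\ell \leq d(d-1)$ with fixed parity.

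Given these bounds, expanding $[k_i] = e^{k_i \h/2} - e^{-k_i \h/2}$ and pairing terms with opposite sign choices reduces $F^{[r]}_\mu(\h)$ to a linear combination of expressions $e^{m\h} \pm e^{-m\h}$ for integers $1 \leq m \leq d(d-1)/2 = \binom{d}{2}$, with integer coefficients. The parity of $\ell = d/r + n - 2$ matches that of the exponent $d/r + 2g - 2 + n$ that we extract from $F^{[r]}_\mu(\h)$, yielding the claimed formula with integer coefficients $C^{[r]}(\mu, m)$. I do not anticipate any serious obstacle: the entire argument hinges on the mild coincidence that the initial score of the orbifold correlator also evaluates to $d(d-1)$, so that the bound on $m$ remains $\binom{d}{2}$ and the structural form is preserved.
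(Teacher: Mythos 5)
Your overall strategy---mimic the proof of \cref{thm:main} starting from \cref{prop:orbifoldwedge}---is the paper's, and your computation of the initial score $d(d-1)$ of the orbifold correlator is correct and matches the paper. However, there is a genuine gap in the step where you invoke \cref{lem:terms}: you claim the connected correlator expands as a sum of pure $q$-integer products $[k_1]\,[k_2]\cdots[k_\ell]$, but the lemma actually produces terms of the form $[k_1]\cdots[k_\ell]\,\frac{[ab]}{[b]}$ with $1 \leq a \leq \max\{a_1,\ldots,a_m\}$. In the proof of \cref{thm:main} this extra factor is invisible only because the maximal subscript there is $1$, which forces $a=1$ and $\frac{[b]}{[b]}=1$. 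For the orbifold correlator the maximal subscript is $r$, so $a$ can be as large as $r$ and the factor $\frac{[ad]}{[d]} = e^{(a-1)d\h/2}+e^{(a-3)d\h/2}+\cdots+e^{-(a-1)d\h/2}$ is genuinely present and is not a product of $q$-integers; for instance $\langle \, \E_2(2\h)\,\E_{-2}(0) \, \rangle^\circ = \frac{[4]}{[2]} = e^{\h}+e^{-\h}$, so your claimed expansion already fails for $r=2$, $\mu=(2)$.

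This omission matters in two places. First, the bound $m \leq \binom{d}{2}$ does not follow from bounding $k_1+\cdots+k_\ell$ alone: the exponentials coming from $\frac{[ad]}{[d]}$ raise the top exponent to $\tfrac{1}{2}\big(k_1+\cdots+k_\ell+(a-1)d\big)$, so \cref{lem:score} must be applied to the term $[k_1]\cdots[k_\ell]\,\langle\,\E_a(b'\h)\,\E_{-a}(b''\h)\,\rangle$, whose score is $k_1+\cdots+k_\ell+(a-1)d$; the paper records precisely the inequality $k_1+\cdots+k_\ell+(ad-d) \leq d(d-1)$ together with the corresponding parity constraint. Second, the pairing argument must also incorporate the factor $\frac{[ad]}{[d]}$: the paper pairs opposite sign choices $(s_1,\ldots,s_\ell)$ while simultaneously reflecting the index in the expansion of $\frac{[ad]}{[d]}$ (which is even under $\h \mapsto -\h$), and only then does one obtain integer combinations of $e^{m\h}-e^{-m\h}$ or $e^{m\h}+e^{-m\h}$ according to the parity of $\ell$. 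With these corrections your argument becomes the paper's proof; as written, the central expansion claim is false for every $r \geq 2$.
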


\begin{proof}
The argument mimics that used in the previous section to prove \cref{thm:main}. By \cref{lem:terms}, the connected correlator $\langle \, \E_r(r\h)^{d/r} \, \E_{-\mu_1}(0) \cdots \E_{-\mu_n}(0) \, \rangle^\circ$ can be expressed as a sum of terms of the form
\[
[k_1] \, [k_2] \cdots [k_\ell] \, \frac{[ad]}{[d]},
\]
where $\ell = \frac{d}{r} + n - 2$ and $0 < a \leq r$. The score of the correlator is $d(d-1)$, so we must have
\[
k_1 + k_2 + \cdots + k_\ell + (ad-d) \leq d(d-1) \qquad \text{and} \qquad k_1 + k_2 + \cdots + k_\ell + (ad-d) \equiv d(d-1) \equiv 0 \pmod{2}.
\]

Expanding in powers of $\h$ yields
\begin{align*}
& [k_1] \, [k_2] \cdots [k_\ell] \, \frac{[ad]}{[d]} \\
={}& \big( e^{k_1 \h/2} - e^{-k_1 \h/2} \big) \big( e^{k_2 \h/2} - e^{-k_2 \h/2} \big) \cdots \big( e^{k_\ell \h/2} - e^{-k_\ell \h/2} \big) \big( e^{(a-1)d\h/2} + e^{(a-3)d\h/2} + \cdots + e^{-(a-1)d\h/2} \big) \\
={}& \sum_{i=0}^a \sum_{s_1, s_2, \ldots, s_\ell \in \{\pm 1\}} s_1 s_2 \cdots s_\ell \, e^{(s_1 k_1 + s_2 k_2 + \cdots + s_\ell k_\ell)\h/2} e^{(a-1-2i)d\h/2}.
\end{align*}

Now pair the terms in this summation that have opposite choices of signs for $s_1, s_2, \ldots, s_\ell$ and values of $i$ that sum to $a$. If $\ell$ is odd, then $[k_1] \, [k_2] \cdots [k_\ell] \, \frac{[ad]}{[d]}$ is a sum of terms of the form $\pm (e^{m\h} - e^{-m\h})$, while if $\ell$ is even, it is a sum of terms of the form $\pm (e^{m\h} + e^{-m\h})$.

From this point onwards, the proof is essentially identical to that of \cref{thm:main} and we omit the details.
\end{proof}

\cref{thm:orbifold} allows us to obtain the large genus asymptotics for orbifold Hurwitz numbers, using the same argument as in the proof of \cref{thm:asymptotics}.

\begin{theorem}
For fixed $\mu = (\mu_1, \ldots, \mu_n)$ with sum $d$ that is divisible by $r$, we have
\[
H^{[r]}_{g;\mu} \sim \frac{2}{r^{d/r} (d/r)! \, \mu_1 \cdots \mu_n} \binom{d}{2}^{\frac{d}{r}+2g-2+n} \quad \text{as } g \to \infty.
\]
\end{theorem}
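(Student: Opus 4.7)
The plan is to invoke the structure theorem \cref{thm:orbifold} and show that the leading coefficient $C^{[r]}(\mu, \binom{d}{2})$ equals $1$; the asymptotic then follows immediately from the dominance of the $m = \binom{d}{2}$ summand in the finite sum expressing $H^{[r]}_{g;\mu}$. The argument will replicate that of \cref{thm:asymptotics}, with the machinery (\cref{lem:terms} and \cref{lem:score}) already in place from the proof of \cref{thm:orbifold}.

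The first step is to compute the score of the initial correlator $\langle \, \E_r(r\h)^{d/r} \, \E_{-\mu_1}(0) \cdots \E_{-\mu_n}(0) \, \rangle^\circ$: the only non-vanishing contributions to the double sum $\sum_{i<j}(a_ib_j - a_jb_i)$ come from pairs consisting of one $\E_r(r\h)$ and one $\E_{-\mu_j}(0)$, each contributing $r \mu_j$, giving a total of $(d/r) \cdot r \cdot d = d^2$; combined with $-\sum b_i = -d$, the initial score is $d(d-1)$. By \cref{lem:score}, every term produced by \cref{alg:connected} has score at most $d(d-1)$, and this maximum is achieved only by taking the commutation term at every application of Step (4), since each passing term strictly decreases the score (the factor $a_ib_{i+1}-a_{i+1}b_i$ is a positive integer throughout the algorithm under the standing hypotheses). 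The resulting deterministic pathway produces a single term of the form $[k_1] \, [k_2] \cdots [k_{\ell}] \, \tfrac{[ab]}{[b]}$ with $\ell = d/r + n - 2$, carrying overall coefficient $+1$ because every commutation factor is a positive $q$-integer and no minus signs are introduced.

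Expanding this unique maximum-score term as in the proof of \cref{thm:orbifold}, the largest exponential that appears is $e^{m\h}$ with $m = \binom{d}{2}$, occurring with coefficient $+1$ (corresponding to the choice of all positive signs in the expansion). No other algorithmic term has sufficient score to contribute to $e^{\binom{d}{2}\h}$, so $C^{[r]}(\mu, \binom{d}{2}) = 1$, and the stated asymptotic follows by isolating the leading exponential in the structure formula. The main technical point is verifying that the always-commutate pathway actually produces a non-vanishing term: this requires a brief inductive check that the partial subscript sums encountered during the algorithm remain strictly positive, so that Step (2) of \cref{alg:connected} never returns zero. This follows from the observation that each commutation merges two consecutive operators and therefore only removes a single prefix sum from the list while preserving positivity of the rest, together with the fact that the initial prefix sums $r, 2r, \ldots, d, d-\mu_1, \ldots, \mu_n$ are all strictly positive.
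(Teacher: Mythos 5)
Your proposal is correct and takes essentially the same route as the paper, which proves the orbifold asymptotics by repeating the argument of \cref{thm:asymptotics}: the initial correlator has score $d(d-1)$, only the always-commute branch of \cref{alg:connected} attains that score, and this unique maximal-score term forces $C^{[r]}(\mu, \binom{d}{2}) = 1$, from which the asymptotic follows. The only (harmless) deviations are that you fix the sign $+1$ directly from the positivity of the commutation factors and the all-plus-signs term in the exponential expansion, where the paper instead deduces $\pm 1$ and appeals to positivity of the Hurwitz numbers, and that you spell out the prefix-sum check guaranteeing the always-commute term is not killed by Step~(2), which the paper leaves implicit.
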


\subsection{Monotone Hurwitz numbers and beyond}

It is natural to seek other enumerative problems exhibiting structure analogous to that stated for Hurwitz numbers in \cref{thm:main}. The case of orbifold Hurwitz numbers considered in \cref{subsec:orbifold} provides an example that is amenable to the analysis of connected correlators of $\E$-operators. Another natural candidate is given by the monotone Hurwitz numbers, whose definition is a mild modification of the one for usual Hurwitz numbers. Monotone Hurwitz numbers were introduced by Goulden, Guay-Paquet and Novak, who realised that they arise as coefficients in the large $N$ expansion of the HCIZ matrix integral over the unitary group $U(N)$~\cite{gou-gua-nov14}.

We say that a tuple $((a_1~b_1), (a_2~b_2), \cdots, (a_m~b_m))$ of transpositions is {\em monotone} if $b_1 \leq b_2 \leq \cdots \leq b_m$, where we write $a_i < b_i$ by convention.

\begin{definition}
Let $g$ be an integer and $\mu = (\mu_1, \ldots, \mu_n)$ be a tuple of positive integers. The {\em monotone Hurwitz number} $\vec{H}_{g;\mu}$ is equal to $\frac{|\mathrm{Aut}(\mu)|}{|\mu|!}$ times the number of monotone tuples $(\tau_1, \tau_2, \ldots, \tau_{|\mu|+2g-2+n})$ of transpositions in the symmetric group $S_{|\mu|}$ that generate a transitive subgroup and whose product has cycle type $\mu$. The disconnected monotone Hurwitz number $\vec{H}_{g;\mu}^\bullet$ is obtained by dropping the transitivity constraint.
\end{definition}

Monotone Hurwitz numbers can be effectively computed via the monotone cut-and-join recursion~\cite{gou-gua-nov13}. Thus, one can seek to numerically confirm whether for fixed $\mu$, the function $\vec{H}_{g;\mu}$ can be expressed as a linear combination of exponentials in the quantity $|\mu|+2g-2+n$. The data in \cref{tab:monotone} strongly suggests that this is almost true, except that one must additionally include a linear term. This is stated precisely in the following conjecture, which might also be resolved by using the infinite wedge space to capture monotone Hurwitz numbers~\cite{kra-lew-sha19}.

\begin{table}[ht!]
\centering
\begin{tabular}{cl} \toprule
$\mu = (\mu_1, \ldots, \mu_n)$ & $\vec{H}_{g;\mu}$ for $0 \leq g \leq 20$ ($k = |\mu| + 2g - 2 + n$) \\ \midrule
$2$ & $1 \cdot 1^k$ \\
$11$ & $1 \cdot 1^k$ \\ \midrule
$3$ & $\frac{2}{3} \cdot 2^k - \frac{2}{3} \cdot 1^k$ \\
$21$ & $\frac{2}{3} \cdot 2^k - \frac{4}{3} \cdot 1^k$ \\
$111$ & $\frac{2}{3} \cdot 2^k - \frac{8}{3} \cdot 1^k$ \\ \midrule
$4$ & $\frac{3}{8} \cdot 3^k - \frac{2}{3} \cdot 2^k + \frac{5}{24} \cdot 1^k$ \\
$31$ & $\frac{3}{8} \cdot 3^k - 1 \cdot 2^k + \frac{5}{8} \cdot 1^k$ \\
$22$ & $\frac{3}{8} \cdot 3^k - \frac{2}{3} \cdot 2^k + \frac{7}{24} \cdot 1^k \begingroup \color{red} - \frac{1}{2} \cdot k \endgroup$ \\
$211$ & $\frac{3}{8} \cdot 3^k - \frac{4}{3} \cdot 2^k + \frac{49}{24} \cdot 1^k \begingroup \color{red} - \frac{1}{2} \cdot k \endgroup$ \\
$1111$ & $\frac{3}{8} \cdot 3^k - 2 \cdot 2^k + \frac{61}{8} \cdot 1^k \begingroup \color{red}- \frac{3}{2} \cdot k \endgroup$ \\ \midrule
$5$ & $\frac{8}{45} \cdot 4^k - \frac{9}{20} \cdot 3^k + \frac{14}{45} \cdot 2^k - \frac{7}{180} \cdot 1^k$ \\
$41$ & $\frac{8}{45} \cdot 4^k - \frac{3}{5} \cdot 3^k + \frac{28}{45} \cdot 2^k - \frac{7}{45} \cdot 1^k$ \\
$32$ & $\frac{8}{45} \cdot 4^k - \frac{9}{20} \cdot 3^k - \frac{8}{90} \cdot 2^k + \frac{29}{60} \cdot 1^k \begingroup \color{red}+ \frac{1}{3} \cdot k \endgroup$ \\
$311$ & $\frac{8}{45} \cdot 4^k - \frac{3}{4} \cdot 3^k + \frac{8}{9} \cdot 2^k - \frac{19}{60} \cdot 1^k \begingroup \color{red}+ \frac{1}{3} \cdot k \endgroup$ \\
$221$ & $\frac{8}{45} \cdot 4^k - \frac{3}{5} \cdot 3^k - \frac{8}{45} \cdot 2^k + \frac{3}{5} \cdot 1^k \begingroup \color{red}+ \frac{4}{3} \cdot k \endgroup$ \\
$2111$ & $\frac{8}{45} \cdot 4^k - \frac{9}{10} \cdot 3^k + \frac{32}{45} \cdot 2^k - \frac{83}{30} \cdot 1^k \begingroup \color{red}+ \frac{10}{3} \cdot k \endgroup$ \\
$11111$ & $\frac{8}{45} \cdot 4^k - \frac{6}{5} \cdot 3^k + \frac{64}{45} \cdot 2^k - \frac{122}{5} \cdot 1^k \begingroup \color{red} + \frac{40}{3} \cdot k \endgroup$ \\ \bottomrule
\end{tabular}
\caption{Conjectural formulae for the monotone Hurwitz numbers $\vec{H}_{g;\mu}$ for each $2 \leq |\mu| \leq 5$, which is known to hold for $0 \leq g \leq 20$. Note that each formula is a linear combination of exponentials in $k = |\mu| + 2g - 2 + n$, plus a linear term in $k$ that is shown in red.}
\label{tab:monotone}
\end{table}

\begin{conjecture} \label{con:monotone}
For fixed $\mu = (\mu_1, \ldots, \mu_n)$ with sum $d \geq 2$, the function $\vec{H}_{g;\mu}$ can be expressed as follows, where $\vec{C}(\mu, m) \in \mathbb{Q}$.
\[
\vec{H}_{g;\mu} = \vec{C}(\mu,0) \cdot (d+2g-2+n) + \sum_{m=1}^{d-1} \vec{C}(\mu, m) \cdot m^{d+2g-2+n}
\]
\end{conjecture}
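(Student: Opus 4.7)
The plan is to adapt the infinite wedge framework of \cref{sec:proof} to monotone Hurwitz numbers, following the realization mentioned in~\cite{kra-lew-sha19}. The first step is to establish an analog of \cref{prop:conhurwitzwedge} expressing the generating function $\vec{F}_\mu(\h) = \sum_{g \geq 0} \vec{H}_{g;\mu} \, \h^{d+2g-2+n}/(d+2g-2+n)!$ as a connected correlator on the infinite wedge space. The appropriate replacement for $\E_1(\h)^d$ should be an operator encoding the monotonicity constraint, plausibly built from a formal series of the shape $\prod_{i \geq 1} (1 - \h \, \mathcal{T}_i)^{-1}$ with $\mathcal{T}_i$ a one-parameter family involving $\E$-operators. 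The fact that the exponents in \cref{tab:monotone} never exceed $d-1$, in contrast to the $\binom{d}{2}$ bound of the ordinary case, suggests that the relevant operator yields commutator chains of length at most $d-1$.

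Next I would develop a monotone analog of \cref{alg:connected}, together with analogs of \cref{lem:terms,lem:score}. The passing/commutation dichotomy should persist, but each application of the commutation relation would now contribute weights arising from the monotone operator rather than a single $q$-integer $[k]$. A modified score analysis should bound the total $k_1 + k_2 + \cdots + k_\ell \leq 2(d-1)$ in those terms giving pure exponential contributions, yielding the family $m^{d+2g-2+n}$ for $1 \leq m \leq d-1$ after the same sign-pairing and coefficient extraction used in the proof of \cref{thm:main}.

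The principal obstacle, and the most interesting novelty, is accounting for the extra linear summand $\vec{C}(\mu,0) \cdot (d+2g-2+n)$, which has no analog in the ordinary Hurwitz setting. I expect this term to arise from expressions of the shape $\h \cdot e^{0 \cdot \h} = \h$, whose coefficient of $\h^k/k!$ is exactly $k$. Such expressions appear naturally when one clears denominators of the form $[c] = c\h + O(\h^3)$ against numerators that vanish to first order at $\h = 0$, a phenomenon specific to the rational weights introduced by monotonicity and absent from the purely polynomial expressions produced by \cref{alg:connected} for ordinary Hurwitz numbers. Isolating precisely which residues contribute a multiple of $\h$ would be the delicate combinatorial core of the argument.

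Once the linear contribution is isolated, the remainder of the argument should follow the template of \cref{thm:main} and \cref{thm:asymptotics}: expand the correlator in powers of $\h$, pair terms with opposite signs, extract coefficients, and identify the leading exponential $m = d-1$ via a score maximality argument that parallels the one used to show $C(\mu, \binom{d}{2}) = 1$. Rationality (rather than integrality) of $\vec{C}(\mu, m)$ should come out automatically, since the monotone weights themselves are rational. A useful sanity check would be to reproduce the closed-form entries of \cref{tab:monotone} for small $\mu$ directly from this framework.
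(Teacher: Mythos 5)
You should first note that the paper does not prove this statement at all: it is stated as \cref{con:monotone} and supported only by numerical evidence (\cref{tab:monotone}, computed via the monotone cut-and-join recursion for $0 \leq g \leq 20$), together with a pointer to the infinite wedge realization of monotone Hurwitz numbers as a possible route. So there is no proof in the paper to compare against, and what you have written is likewise not a proof but a research program: the operator replacing $\E_1(\h)^d$ is only described as ``plausibly built from'' a product $\prod_i (1-\h\,\mathcal{T}_i)^{-1}$, the monotone analogues of \cref{lem:terms,lem:score} and the bound $k_1+\cdots+k_\ell \leq 2(d-1)$ are read off from the shape of \cref{tab:monotone} rather than derived, and you yourself defer the treatment of the linear term as the ``delicate combinatorial core''. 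Those deferred steps are exactly the content of the conjecture.

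There is also a concrete error in the one mechanism you do propose. A summand of the form $\h \cdot e^{0\cdot\h} = \h$ does not produce a contribution proportional to $k = d+2g-2+n$: the coefficient of $\h^k/k!$ in $\h$ is $k!\,\delta_{k,1}$, so it affects only $k=1$. A genuine linear term $\vec{C}(\mu,0)\cdot k$ corresponds to a summand $\vec{C}(\mu,0)\,\h\,e^{\h}$ (or its symmetrization), since the coefficient of $\h^k/k!$ in $\h\,e^{m\h}$ is $k\,m^{k-1}$ and one needs $m=1$; moreover one would have to show that no terms $\h\,e^{m\h}$ with $m \geq 2$ arise, otherwise the structure would contain summands $k\,m^{k-1}$ absent from the conjectured formula. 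Finally, the known wedge-space expression for monotone Hurwitz numbers involves a diagonal operator with rational content-product weights rather than a finite product of $\E$-operators, so the passing/commutation dichotomy of \cref{alg:connected} and the score argument of \cref{lem:score} do not transfer verbatim; setting up and analysing the correct monotone correlator is an open step, not an adaptation.
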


It is interesting to note that the particular structure suggested by \cref{con:monotone} does not extend to the disconnected monotone Hurwitz numbers, since the family of functions of the form ``linear combination of exponentials plus a linear term'' is not closed under multiplication. Although the possibly disconnected enumeration may seem more natural from the algebraic point of view, the connected enumeration is more natural from the geometric point of view and often exhibits better behaviour.

One can continue to seek enumerative problems that exhibit analogous structure, beyond Hurwitz numbers and their monotone counterpart. Dubrovin, Yang and Zagier reproduced \cref{thm:hurwitz} of Hurwitz, but also showed that a certain weighted enumeration of connected graphs with $g$ independent loops and $d$~vertices could be expressed as a linear combination of exponentials~\cite[Theorem~4]{dub-yan-zag17}. One wonders whether a refinement or generalisation may lead to an enumeration $G_{g;\mu}$ with the same structure, where $\mu$ is an arbitrary partition.

Another source of inspiration for amenable enumerative problems is the topological recursion of Chekhov, Eynard and Orantin~\cite{che-eyn06, eyn-ora07}. The topological recursion is known to govern the single, orbifold and monotone Hurwitz numbers already considered in this paper~\cite{eyn-mul-saf11, BHLM14, do-lei-nor16, do-dye-mat17}. Naively, one can think of it as a mechanism to produce enumerative data $X_{g;\mu}$ for $g$ a non-negative integer and $\mu$ a partition, from the initial data of a ``spectral curve''. However, such a brief description belies the ever-increasing power, richness and vastness displayed by the theory. One wonders whether formulae of the type appearing in \cref{thm:main} may govern such enumerative data generated by the topological recursion, for fixed partition and varying genus.

\bibliographystyle{plain}
\bibliography{hurwitz-number-structure}

\end{document}